\documentclass[a4paper,12pt, times]{article}
\usepackage[utf8]{inputenc}

\usepackage{mathrsfs}
\usepackage{graphicx}
\usepackage{enumerate}
\usepackage{multicol}
\usepackage{color}
\usepackage{amsmath,amssymb,amscd}
\usepackage{amsthm}

\usepackage{times}

\textheight=25cm
\setlength{\topmargin}{-2cm}

\addtolength{\evensidemargin}{-1.5cm}
\addtolength{\oddsidemargin}{-1.5cm}
\addtolength{\textwidth}{3cm}

\theoremstyle{plain}
\newtheorem{thm}{Theorem}
\newtheorem{lem}{Lemma}
\newtheorem{cor}{Corollary}

\theoremstyle{definition}
\newtheorem{dfn}{Definition}
\newtheorem{ex}{Example}

\theoremstyle{remark}
\newtheorem{rem}{Remark}

\title{On functional equations characterizing derivations: methods and examples}
\author{Eszter Gselmann, Gergely Kiss and Csaba Vincze}

\begin{document}

\maketitle

\begin{abstract}
Functional equations satisfied by additive functions have a special interest not only in the theory 
of functional equations, but also in the theory of (commutative) algebra because the fundamental notions such as derivations and automorphisms are additive functions satisfying some further functional equations as well. It is an important question that how these morphisms can be characterized among additive mappings in general. 

The paper contains some multivariate characterizations of higher order derivations. 
The univariate characterizations are given as consequences by the diagonalization of the multivariate formulas. 
 This method allows us to refine the process of computing the solutions of univariate functional equations of the form 
\[
 \sum_{k=1}^{n}x^{p_{k}}f_{k}(x^{q_{k}})=0, 
\]
where $p_k$ and $q_k$ ($k=1, \ldots, n$) are given nonnegative integers and the unknown functions $f_{1}, \ldots, f_{n}\colon R\to R$ are supposed to be additive on the ring $R$. It is illustrated by some explicit examples too. 

As another application of the multivariate setting we use spectral analysis and spectral synthesis in the space of the additive solutions 
to prove that it is spanned by differential operators. 
The results are uniformly based on the investigation of the  multivariate version of the functional equations.
\end{abstract}

\section{Introduction}

Functional equations satisfied by additive functions have a rather extensive literature
\cite{KisLac25}, \cite{KisVarVin15}, \cite{KisVin17}, \cite{KisVin17a}, \cite{Var10}, \cite{VarVin09}. They appear not only in the theory 
of functional equations, but also in the theory of (commutative) algebra \cite{Kha91}, \cite{Kuc09}, \cite{Rei98}, \cite{UngRei98}, \cite{ZarSam75}. 
It is an important question that how special morphisms can be characterized among additive mappings in general. 
This paper is devoted to the case of functional equations characterizing derivations. It is motivated by some recent results \cite{Eba15}, \cite{Eba17}, \cite{Eba18}
due to B. Ebanks. We are looking for solutions of functional equations of the form 
\begin{equation}
\label{Ebanks}
 \sum_{k=1}^{n}x^{p_{k}}f_{k}(x^{q_{k}})=0, 
\end{equation}
where $p_k$ and $q_k$ ($k=1, \ldots, n$) are given nonnegative integers and the unknown functions $f_{1}$, $\ldots$, $f_{n}\colon R\to R$ are supposed to be additive on the ring $R$. According to the homogeneity of additive functions \cite[Lemma 2.2]{Eba15} it is enough to investigate equations with constant pairwise sum of the powers, i.e. 
$$p_1+q_1=\ldots=p_n+q_n.$$
The general form of the solutions is formulated as a conjecture in \cite[Conjecture 4.15]{Eba15}. The proof can be found in \cite{Eba17} 
by using special substitutions of the variable and an inductive argument. We have a  significantly different starting point by following the method of free variables. The paper contains some multivariate characterizations of higher order derivations in Section 2. 
The basic results of \cite{Eba15}, \cite{Eba17} (univariate characterizations of higher order derivations) are given as consequences by the diagonalization of the multivariate formulas, see Section 3. 
The method allows us to refine the process of computing the solutions of functional equations of the form (\ref{Ebanks}). The examples (Examples I, Examples II) illustrate that the multivariate version of the functional equations provides a more effective and subtle way to determine the structure of the unknown functions. Especially, functional equations with missing powers (Examples II) can be investigated in this way to avoid 
formal (identically zero) terms in the solution. As a refinement of Ebanks' method we follow the main steps such as  
\begin{enumerate}[(i)]
\item The formulation of the multivariate version of the functional equation. 
\item The substitution of value $1$ as many times as the number of the missing powers (due to the symmetry of the variables there is no need to specify the positions for the substitution).
\item The application of Theorem \ref{Thm_mainmultivariate}/Corollary \ref{Thm_main}.
 \end{enumerate}
The second step decreases the homogeneity degree of the functional equation to keep only non-identically zero terms in the solutions. Therefore it can be easily seen how the number of the nonzero coefficients is related to the maximal order of the solution \cite{EbaRieSah}. 

In Section 4 we present another approach to the problem. The application of the spectral ana\-ly\-sis and the spectral synthesis is a 
general method to find the additive solutions of a functional equation, see \cite{KisLac25}. 
It is a new and important trend in the theory of functional equations; see e.g. \cite{KisVin17} and \cite{KisVin17a}. 
Although the domain should be specified as a finitely generated subfield over the rationals, 
the multivariate version of the functional equation generates a system of functional equations concerning the translates of the original solutions 
in the space of complex valued additive functions restricted to the multiplicative subgroup of the given subfield. 
Taking into account the fundamental result \cite[Theorem 4.3]{KisLac25} such a closed translation invariant linear 
subspace of additive functions contains automorphism solutions (spectral analysis) and the space of the solutions is 
spanned by the compositions of automorphisms and differential operators (spectral synthesis). 
All functional equations in the paper are also discussed by the help of the spectral analysis and the spectral synthesis. 
We prove that the automorphism solutions must be trivial (identity) and, consequently, 
the space of the solutions is spanned by differential operators. According to the results presented in Section 2 and Section 3, 
the investigation of the detailed form of the differential operator solutions is omitted. 
However, Subsection 4.4 contains an alternative way to prove Theorem \ref{Thm_mainmultivariate}/Corollary \ref{Thm_main} 
in the special case $n=3$. 
The proof uses a descending process instead of the inductive argument.  

In what follows we summarize some basic theoretical facts, terminology and notations.

\subsection*{Derivations}

For the general theory of derivations we can refer to Kuczma \cite{Kuc09}, see also Zariski--Samuel \cite{ZarSam75} and Kharchenko \cite{Kha91}.

\begin{dfn}\label{D2.1.1}
Let $Q$ be a ring and consider a subring $P\subset Q$.
A function $f:P\rightarrow Q$ is called a \emph{derivation}\index{derivation} if it is additive,
i.e.
\[
f(x+y)=f(x)+f(y)
\quad
\left(x, y\in P\right)
\]
and also satisfies the so-called \emph{Leibniz rule}\index{Leibniz rule}
\[
f(xy)=f(x)y+xf(y)
\quad
\left(x, y\in P\right). 
\]
\end{dfn}

\begin{ex}
Let $\mathbb{F}$ be a field, and $\mathbb{F}[x]$
be the ring of polynomials with coefficients from $\mathbb{F}$. For a polynomial
$p\in\mathbb{F}[x]$, $p(x)=\sum_{k=0}^{n}a_{k}x^{k}$, define the function
$f\colon \mathbb{F}[x]\rightarrow\mathbb{F}[x]$ as
\[
f(p)=p',
\]
where $p'(x)=\sum_{k=1}^{n}ka_{k}x^{k-1}$ is the derivative of the polynomial $p$.
Then the function $f$ clearly fulfills
\[
f(p+q)=f(p)+f(q)
\]
and
\[
f(pq)=pf(q)+qf(p)
\]
for all $p, q\in\mathbb{F}[x]$. Hence $f$ is a derivation.
\end{ex}

\begin{ex}
 Let $\mathbb{F}$ be a field, and suppose that we are given a derivation 
 $f\colon \mathbb{F}\to \mathbb{F}$. We define the mapping $f_{0}\colon \mathbb{F}[x]\to \mathbb{F}[x]$ in the following way. 
If $p\in \mathbb{F}[x]$ has the form 
\[
 p(x)=\sum_{k=0}^{n}a_{k}x^{k}, 
\]
then let 
\[
 f_{0}(p)= p^{f}(x)=\sum_{k=0}^{n}f(a_{k})x^{k}. 
\]
Then $f_{0}\colon \mathbb{F}[x]\to \mathbb{F}[x]$ is a derivation. 
\end{ex}

The following lemma says that the above two examples have rather fundamental importance. 

\begin{lem}
 Let $(\mathbb{K}, +, \cdot)$ be a field and let $(\mathbb{F}, +, \cdot)$ be a subfield of $\mathbb{K}$. 
 If $f\colon \mathbb{F}\to \mathbb{K}$ is a derivation, then for any $a\in \mathbb{F}$ and for arbitrary 
 polynomial $p\in \mathbb{F}[x]$ we have 
 \[
  f(p(a))=  p^{f}(a)+f(a)p'(a). 
 \]
\end{lem}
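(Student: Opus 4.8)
The plan is to reduce everything to the behaviour of $f$ on powers of a single element and then exploit additivity. First I would record the basic arithmetic facts about a derivation. Since $f$ is additive and $\mathbb{K}$ is a field (hence commutative), the Leibniz rule gives $f(1)=f(1\cdot 1)=f(1)\cdot 1 + 1\cdot f(1)=2f(1)$, whence $f(1)=0$, so in particular $f(a^{0})=0$. The crucial auxiliary statement is the power rule
\[
f(a^{k})=k\,a^{k-1}f(a)\qquad(k\ge 1),
\]
which I would prove by induction on $k$: the base case $k=1$ is immediate, and the inductive step is
\[
f(a^{k+1})=f(a\cdot a^{k})=f(a)a^{k}+a\,f(a^{k})=a^{k}f(a)+a\cdot k a^{k-1}f(a)=(k+1)a^{k}f(a),
\]
using the Leibniz rule together with the induction hypothesis and commutativity.

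With the power rule in hand, the rest is a direct computation. Writing $p(x)=\sum_{k=0}^{n}a_{k}x^{k}$, additivity of $f$ gives
\[
f(p(a))=f\!\left(\sum_{k=0}^{n}a_{k}a^{k}\right)=\sum_{k=0}^{n}f\!\left(a_{k}a^{k}\right),
\]
and then the Leibniz rule applied to each product $a_{k}a^{k}$ yields
\[
f\!\left(a_{k}a^{k}\right)=f(a_{k})a^{k}+a_{k}f(a^{k}).
\]
Summing these and separating the two groups of terms produces one sum $\sum_{k=0}^{n}f(a_{k})a^{k}$, which is exactly $p^{f}(a)$ by the definition of the mapping $p\mapsto p^{f}$, and a second sum $\sum_{k=0}^{n}a_{k}f(a^{k})$.

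Finally I would evaluate the second sum by means of the power rule. Because the $k=0$ term contains $f(a^{0})=f(1)=0$, it drops out, and the remaining terms give
\[
\sum_{k=1}^{n}a_{k}\,k\,a^{k-1}f(a)=\left(\sum_{k=1}^{n}k\,a_{k}a^{k-1}\right)f(a)=p'(a)f(a),
\]
where the coefficient of $f(a)$ is precisely the formal derivative $p'$ evaluated at $a$. Adding the two contributions yields $f(p(a))=p^{f}(a)+f(a)p'(a)$, which is the claim. I do not expect a genuine obstacle here: the only point requiring care is the inductive proof of the power rule, and throughout one must keep in mind that the commutativity of the field $\mathbb{K}$ is what allows the factor $f(a)$ to be pulled out of the sum in the last step.
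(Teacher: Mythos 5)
Your proof is correct. Note that the paper itself states this lemma without proof (it is a standard fact about derivations, quoted from the literature, e.g.\ Kuczma), so there is no argument in the paper to compare against; your write-up supplies exactly the canonical argument one would expect. All the steps check out: $f(1)=0$ follows from the Leibniz rule in the additive group of $\mathbb{K}$, the power rule $f(a^{k})=k\,a^{k-1}f(a)$ goes through by the induction you describe (both $a$ and $a^{k}$ lie in $\mathbb{F}$, so applying the Leibniz rule there is legitimate), and the final decomposition of $\sum_{k}f(a_{k}a^{k})$ into $p^{f}(a)$ plus $p'(a)f(a)$ is immediate from additivity and the definitions of $p^{f}$ and the formal derivative $p'$. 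The one point you could make explicit is the degenerate case $a=0$ (where $a^{k-1}$ for $k=1$ should be read as $1$), but this is a matter of convention rather than a gap.
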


As the following theorem shows, in fields with characteristic zero, with the aid of the notion of algebraic base, 
we are able to construct non-identically zero derivations.

\begin{thm}
Let $\mathbb{K}$ be a field of characteristic zero, let $\mathbb{F}$
be a subfield of $\mathbb{K}$, let $S$ be an algebraic base of $\mathbb{K}$ over $\mathbb{F}$,
if it exists, and let $S=\emptyset$ otherwise.
Let $f\colon \mathbb{F}\to \mathbb{K}$ be a derivation.
Then, for every function $u\colon S\to \mathbb{K}$,
there exists a unique derivation $g\colon \mathbb{K}\to \mathbb{K}$
such that $g \vert_{\mathbb{F}}=f$ and $g \vert_{S}=u$.
\end{thm}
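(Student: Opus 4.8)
The plan is to split the construction into two stages matching the structure of the extension: a transcendental stage across the algebraic base $S$, where the values $u$ provide genuine freedom, and an algebraic stage from $\mathbb{F}(S)$ up to $\mathbb{K}$, where the lemma above forces everything uniquely. First I would extend $f$ to the field $\mathbb{F}(S)$. On the polynomial ring $\mathbb{F}[S]$ I would define the candidate derivation by imposing additivity, the Leibniz rule and the prescribed values ($f$ on $\mathbb{F}$, $u$ on $S$); concretely one differentiates each monomial term by term, treating every $s\in S$ as an independent variable with assigned image $u(s)$. Because $S$ is algebraically independent over $\mathbb{F}$, there are no nontrivial polynomial relations to respect, so this prescription is well-defined, additive and multiplicative. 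I would then pass to the fraction field via the quotient rule $g(p/q)=(g(p)q-pg(q))/q^{2}$, whose consistency (independence of the representative of a fraction) again follows from the absence of hidden relations in a purely transcendental extension. This yields a derivation $g_{0}\colon \mathbb{F}(S)\to\mathbb{K}$ with $g_{0}\vert_{\mathbb{F}}=f$ and $g_{0}\vert_{S}=u$, and these conditions force $g_{0}$ uniquely.

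Next I would extend $g_{0}$ from $\mathbb{F}(S)$ to all of $\mathbb{K}$, using that $\mathbb{K}$ is algebraic over $\mathbb{F}(S)$ by the defining property of an algebraic base. I would apply Zorn's lemma to the set of pairs $(\mathbb{M},h)$ consisting of an intermediate field $\mathbb{F}(S)\subseteq\mathbb{M}\subseteq\mathbb{K}$ and a derivation $h$ extending $g_{0}$, ordered by extension; a maximal element $(\mathbb{M},h)$ exists. If $\mathbb{M}\neq\mathbb{K}$, pick $\alpha\in\mathbb{K}\setminus\mathbb{M}$ with minimal polynomial $p$ over $\mathbb{M}$. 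Here the lemma is decisive: differentiating the relation $p(\alpha)=0$ gives $p^{h}(\alpha)+h(\alpha)p'(\alpha)=0$, so one must set $h(\alpha)=-p^{h}(\alpha)/p'(\alpha)$. This is exactly where characteristic zero enters, since then the extension is separable and $p'(\alpha)\neq 0$, so $h(\alpha)$ is both well-defined and uniquely determined. One then checks that this value extends $h$ to a derivation on $\mathbb{M}(\alpha)$, contradicting maximality; hence $\mathbb{M}=\mathbb{K}$ and $g:=h$ is the desired derivation.

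Uniqueness follows because both stages are forced: the extension to $\mathbb{F}(S)$ is pinned down by the derivation axioms together with the prescribed values on $\mathbb{F}$ and $S$, while at every algebraic step the equation $p^{h}(\alpha)+h(\alpha)p'(\alpha)=0$ together with $p'(\alpha)\neq 0$ leaves no freedom in $h(\alpha)$. The step I expect to be the main obstacle is the well-definedness in the algebraic stage — verifying that the assignment $h(\alpha)=-p^{h}(\alpha)/p'(\alpha)$ is independent of how an element of $\mathbb{M}(\alpha)$ is represented as a polynomial in $\alpha$ and genuinely respects the Leibniz rule on products. This amounts to showing the extension is consistent with every polynomial identity holding in $\mathbb{M}(\alpha)$, and it is precisely the separability guaranteed by characteristic zero that makes this consistency go through.
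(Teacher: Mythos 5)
The paper does not actually prove this theorem: it is stated as background material and implicitly referred to the cited literature on derivations (Kuczma \cite{Kuc09}, Zariski--Samuel \cite{ZarSam75}), where it is established exactly along the lines you propose. Your two-stage argument --- formal partial differentiation on $\mathbb{F}[S]$ using algebraic independence, the quotient rule to reach $\mathbb{F}(S)$, then Zorn's lemma and the forced value $h(\alpha)=-p^{h}(\alpha)/p'(\alpha)$ for the algebraic stage --- is the standard proof, and it is correct; the uniqueness argument (both stages are forced by the derivation axioms) is also right. Two small corrections to your commentary. First, the well-definedness of the quotient-rule extension has nothing to do with the absence of relations in a transcendental extension: a derivation on any integral domain extends, uniquely, to its fraction field, as one sees by differentiating $pt=rq$. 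Second, characteristic zero (separability) enters only in guaranteeing $p'(\alpha)\neq 0$, so that the forced value of $h(\alpha)$ exists; the consistency check you single out as the main obstacle is then a formal computation rather than a separability issue: if $r_{1}(\alpha)=r_{2}(\alpha)$, then $r_{1}-r_{2}=pq$ for some $q\in\mathbb{M}[x]$, and with $\lambda=-p^{h}(\alpha)/p'(\alpha)$ one gets $(pq)^{h}(\alpha)+\lambda\,(pq)'(\alpha)=\bigl(p^{h}(\alpha)+\lambda p'(\alpha)\bigr)q(\alpha)=0$, using $p(\alpha)=0$ and the product identities $(pq)^{h}=p^{h}q+pq^{h}$, $(pq)'=p'q+pq'$; the Leibniz rule for the extended map on $\mathbb{M}(\alpha)=\mathbb{M}[\alpha]$ follows from the same two identities.
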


\begin{rem}\label{Rem1}
If $R$ is a commutative ring and $f\colon R\to R$ is a derivation, then  
\[
 f(x^{2})=2xf(x)
 \quad 
 \left(x\in R\right)
\]
is a direct consequence of the Leibniz rule. In general this identity does not characterize derivations among additive functions as the following argument shows. Substituting $z=x+y$ in place of $x$  
\[
 f(z^{2})=2(x+y)f(x+y)= 
 2\left(xf(x)+xf(y)+yf(x)+yf(y)\right). 
\]
On the other hand 
\[
 f(z^{2})=f(x^{2}+2xy+y^{2})= 2xf(x)+2f(xy)+2yf(y). 
\]
Therefore 
\[
 2f(xy)=2xf(y)+2yf(x) 
 \qquad 
 \left(x\in R\right), 
\]
i.e. $2f$ is a derivation. Unfortunately the division by $2$ is not allowed without any further assumption on the ring $R$. 
Therefore some additional restrictions on the ring appear typically in the results. 
\end{rem}

The notion of derivation can be extended in several ways. We will employ the concept of higher order derivations according to Reich \cite{Rei98} and Unger--Reich \cite{UngRei98}. 

\begin{dfn}
 Let $R$ be a ring. The identically zero map is the only \emph{derivation of order zero}. For each $n\in \mathbb{N}$, an additive mapping 
 $f\colon R\to R$ is termed to be a \emph{derivation of order $n$}, if there exists $B\colon R\times R\to R$ such that 
 $B$ is a bi-derivation of order $n-1$ (that is, $B$ is a derivation of order $n-1$ in each variable) and 
 \[
  f(xy)-xf(y)-f(x)y=B(x, y) 
  \qquad 
  \left(x, y\in R\right). 
 \]
 The set of derivations of order $n$ of the ring $R$ will be denoted by $\mathscr{D}_{n}(R)$. 
\end{dfn}

\begin{rem}
\label{pathologic}
Since $\mathscr{D}_{0}(R)=\left\{0\right\}$, the only bi-derivation of order zero is the identically zero function, thus $f\in \mathscr{D}_{1}(R)$ if and only if
  \[
   f(xy)=xf(y)+f(x)y
  \]
that is, the notions of first order derivations and derivations coincide. 
\end{rem}

\begin{rem}
 Let $R$ be a commutative ring and $d\colon R \to R$ be a derivation. 
 Let further $n\in \mathbb{N}$ be arbitrary and 
 \[
  d^{0}= \mathrm{id}, 
  \quad 
  \text{and}
  \quad 
  d^{n}= d\circ d^{n-1} 
  \quad 
  \left(n\in \mathbb{N}\right). 
 \]
Then the mapping $d^{n}\colon R\to R$ is a derivation of order $n$. 

To see this, we will use the following formula 
\[
 d^{k}(xy)= \sum_{i=0}^{k}\binom{k}{i}d^{i}(x)d^{k-i}(y) 
 \qquad
 \left(x, y\in R\right), 
\]
which is valid for any $k\in \mathbb{N}$, and can be proved by induction on $k$. 

For $n=1$ the above statement automatically holds, since the notion of first order derivations and that of derivations coincide. 

Let us assume that there exists an $n\in \mathbb{N}$ so that the statement is fulfilled for any $k\leq n$, that is, 
$d^{k}\in \mathscr{D}_{k}(R)$ holds.  
Then 
\[
 d^{n+1}(xy)=\sum_{i=0}^{n+1}\binom{n+1}{i}d^{i}(x)d^{n+1-i}(y) 
 \qquad 
 \left(x, y\in R\right), 
\]
yielding that 
\[
 d^{n+1}(xy)-xd^{n+1}(y)-yd^{n+1}(x)= \sum_{i=1}^{n}\binom{n+1}{i}d^{i}(x)d^{n+1-i}(y)
 \qquad 
 \left(x, y\in R\right).  
\]
Thus, the only thing that has to be clarified is that the mapping $B\colon R\times R\to R$ defined by 
\[
 B(x, y)= \sum_{i=1}^{n}\binom{n+1}{i}d^{i}(x)d^{n+1-i}(y) 
 \qquad 
 \left(x, y\in R\right)
\]
is a bi-derivation of order $n$. 
Due to the induction hypothesis, for any $k\leq n$, the function $d^{k}$ is a derivation of order $k$. 
Further, we also have $\mathscr{D}_{k-1}(R)\subset \mathscr{D}_{k}(R)$ for all $k\in \mathbb{N}$ and due to the fact that 
$\mathscr{D}_{k}(R)$ is an $R$-module, we obtain that the function $B$ is a derivation of order $n$ in each of its variables.

Of course, $\mathscr{D}_{n}(R)\setminus \mathscr{D}_{n-1}(R)\neq\emptyset$ does not hold in general. To see this, it is 
enough to bear in mind that $\mathscr{D}_{n}(\mathbb{Z})=\left\{ 0\right\}$ for any $n\in \mathbb{N}$.

At the same time, in case $R$ is an integral domain with $\mathrm{char}(R)>n$ or $\mathrm{char}(R)=0$ 
and there is a non-identically zero derivation $d\colon R\to R$, then 
for any $n\in \mathbb{N}$ we have 
\[
 d^{n}\in \mathscr{D}_{n}(R) 
 \qquad 
 \text{and}
 \qquad 
 d^{n}\notin \mathscr{D}_{n-1}(R), 
\]
yielding immediately that $\mathscr{D}_{n}(R)\setminus \mathscr{D}_{n-1}(R)\neq\emptyset$. 

This can also be proved by induction on $n$. For $n=1$ this is automatically true, since $d\colon R\to R$ is a non-identically zero 
derivation. 

Assume now that there is an $n\in \mathbb{N}$ so that the statement holds for any $k\leq n-1$ and suppose to the contrary that 
$d^{n}\in \mathscr{D}_{n-1}(R)$. 

Then due to the definition of higher order derivations, the mapping $B\colon R\times R\to R$ defined by 
\[
B(x, y)= \sum_{i=1}^{n-1}\binom{n}{i} d^{i}(x)d^{n-i}(y)
\qquad 
\left(x, y\in R\right)
\]
has to be a bi-derivation of order $n-2$, i.e., 
\[
 R \ni x \longmapsto B(x, y^{\ast})= \sum_{i=1}^{n-1}\binom{n}{i} d^{i}(x)d^{n-i}(y^{\ast})
\]
has to be a derivation of order $n-2$, where $y^{\ast}\in R$ is kept fixed. 

Clearly, $d^{i}\in \mathscr{D}_{n-2}(R)$ holds, if $i\leq n-2$. 
Since $\mathscr{D}_{n-2}(R)$ is an $R$-module, from this $nd\left(y^{\ast}\right)d^{n-1}\in \mathscr{D}_{n-2}(R)$ would follow, which is a contradiction. 
\end{rem}

\subsection*{Multiadditive functions}

Concerning multiadditive functions we follow the terminology and notations of L. Szé\-kely\-hi\-di \cite{Sze91}, \cite{Sze06}.\index{Székelyhidi, L.}

\begin{dfn}
 Let $G, S$ be commutative semigroups, $n\in \mathbb{N}$ and let $A\colon G^{n}\to S$ be a function. 
 We say that $A$ is \emph{$n$-additive} if it is a homomorphism of $G$ into $S$ in each variable. 
 If $n=1$ or $n=2$ then the function $A$ is simply termed to be \emph{additive}\index{additive function} 
 or \emph{biadditive}\index{biadditive function}, respectively. 
\end{dfn}

The \emph{diagonalization}\index{multiadditive function!--- diagonalization} or \emph{trace} of an $n$-additive function $A\colon G^{n}\to S$ is defined as 
 \[
  A^{\ast}(x)=A\left(x, \ldots, x\right) 
  \qquad 
  \left(x\in G\right). 
 \]
As a direct consequence of the definition each $n$-additive function $A\colon G^{n}\to S$ satisfies 
\begin{multline*}
 A(x_{1}, \ldots, x_{i-1}, kx_{i}, x_{i+1}, \ldots, x_n)
 \\
 =
 kA(x_{1}, \ldots, x_{i-1}, x_{i}, x_{i+1}, \ldots, x_{n})
 \qquad 
 \left(x_{1}, \ldots, x_{n}\in G\right)
\end{multline*}
for all $i=1, \ldots, n$, where $k\in \mathbb{N}$ is arbitrary. 
The same identity holds for any $k\in \mathbb{Z}$ provided that $G$ and $S$ are groups, and 
for $k\in \mathbb{Q}$, provided that $G$ and $S$ are linear spaces over the rationals. 
For the diagonalization of $A$ we have 
\[
 A^{\ast}(kx)=k^{n}A^{\ast}(x)
 \qquad
 \left(x\in G\right). 
\]

One of the most important theoretical results concerning multiadditive functions is the so-called \emph{Polarization formula}, that briefly expresses that every $n$-additive symmetric function is \emph{uniquely} determined by its diagonalization under some conditions on the domain as well as on the range. Suppose that $G$ is a commutative semigroup and $S$ is a commutative group. The action of the {\emph{difference operator}} $\Delta$ on a function  $f\colon G\to S$ is defined by the formula
\[\Delta_y f(x)=f(x+y)-f(x);\]
note that the addition in the argument of the function is the operation of the semigroup $G$ and the subtraction means the inverse of the operation of the group $S$. 

\begin{thm}[Polarization formula]\index{polarization formula}
 Suppose that $G$ is a commutative semigroup, $S$ is a commutative group, $n\in \mathbb{N}$ and $n\geq 1$.  
 If $A\colon G^{n}\to S$ is a symmetric, $n$-additive function, then for all 
 $x, y_{1}, \ldots, y_{m}\in G$ we have 
 \[
  \Delta_{y_{1}, \ldots, y_{m}}A^{\ast}(x)=
  \left\{
  \begin{array}{rcl}
   0 & \text{ if} & m>n \\
   n!A(y_{1}, \ldots, y_{m}) & \text{ if}& m=n.
  \end{array}
  \right.
 \]

\end{thm}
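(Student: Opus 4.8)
The plan is to reduce the iterated difference operator to a single explicit sum and then expand everything by $n$-additivity, collecting terms according to which of the increments $y_1, \ldots, y_m$ actually occur. First I would record the standard closed form of the iterated difference operator. Since $S$ is a commutative group and $G$ a commutative semigroup, the operators $\Delta_{y_1}, \ldots, \Delta_{y_m}$ commute, so their composition is well defined and a short induction on $m$ yields
\[
\Delta_{y_1, \ldots, y_m} A^{\ast}(x) = \sum_{T \subseteq \{1, \ldots, m\}} (-1)^{m - |T|} A^{\ast}\left(x + \sum_{i \in T} y_i\right).
\]

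Next I would expand each summand by $n$-additivity. Writing $w_0 = x$ and $w_i = y_i$ for $i \in T$, the diagonal value $A^{\ast}(x + \sum_{i \in T} y_i)$ equals the sum of $A(w_{\phi(1)}, \ldots, w_{\phi(n)})$ taken over all maps $\phi \colon \{1, \ldots, n\} \to \{0\} \cup T$. Substituting this into the previous display and interchanging the order of summation, I would group the resulting terms by the set $J = \phi(\{1, \ldots, n\}) \cap \{1, \ldots, m\}$ of increment-indices that genuinely appear in a given argument pattern. A fixed pattern with index set $J$ arises exactly for those $T$ with $J \subseteq T$, so its total coefficient is $\sum_{T \supseteq J} (-1)^{m - |T|} = (1 - 1)^{m - |J|}$, which vanishes unless $J = \{1, \ldots, m\}$.

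Hence only those terms survive in which every one of $y_1, \ldots, y_m$ is used among the $n$ slots. If $m > n$ this is impossible by the pigeonhole principle, so the sum is empty and equals $0$. If $m = n$, using all of $y_1, \ldots, y_n$ in exactly $n$ slots forces $\phi$ to be a bijection of $\{1, \ldots, n\}$ onto itself with no slot assigned the value $x$; each of the $n!$ such bijections contributes $A(y_{\phi(1)}, \ldots, y_{\phi(n)})$, which by the symmetry of $A$ equals $A(y_1, \ldots, y_n)$. This gives the claimed value $n! A(y_1, \ldots, y_n)$.

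The routine parts are the induction establishing the closed form of the iterated difference and the multiadditive expansion of each diagonal term. The step I expect to require the most care is the sign bookkeeping: correctly identifying the coefficient of each argument pattern as a binomial sum of the form $(1 - 1)^{m - |J|}$ and thereby isolating the \emph{full} patterns in which all increments occur. Once that cancellation is in hand, the pigeonhole argument for $m > n$ and the count of $n!$ symmetric terms for $m = n$ follow immediately.
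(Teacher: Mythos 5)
Your proof is correct, but there is nothing in the paper to compare it against: the Polarization formula is stated there without proof, as a classical fact about multiadditive functions quoted from the literature (Sz\'ekelyhidi's monographs), so your argument supplies what the paper omits. Your route is the standard one and every step checks out. The closed form $\Delta_{y_{1}, \ldots, y_{m}}A^{\ast}(x)=\sum_{T\subseteq\{1,\ldots,m\}}(-1)^{m-|T|}A^{\ast}\bigl(x+\sum_{i\in T}y_{i}\bigr)$ follows by a clean induction on $m$; the multinomial expansion of each diagonal term uses only additivity in each slot, so it is legitimate over a commutative semigroup domain with values in a commutative group (no subtraction is ever needed inside an argument of $A$, only in $S$); and the crucial bookkeeping step is exactly right: a pattern $\phi\colon\{1,\ldots,n\}\to\{0,1,\ldots,m\}$ occurs in the $T$-summand precisely when $J_{\phi}:=\phi(\{1,\ldots,n\})\cap\{1,\ldots,m\}\subseteq T$, so its total coefficient is $\sum_{J_{\phi}\subseteq T}(-1)^{m-|T|}=(1-1)^{m-|J_{\phi}|}$, which vanishes unless $J_{\phi}=\{1,\ldots,m\}$. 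The pigeonhole argument then disposes of $m>n$, and for $m=n$ the surviving patterns are exactly the $n!$ bijections of $\{1,\ldots,n\}$, each contributing $A(y_{1},\ldots,y_{n})$ by symmetry. One small point worth making explicit in a final write-up: for $T=\emptyset$ the expression $x+\sum_{i\in T}y_{i}$ must be read as $x$, since $G$ need not have a neutral element.
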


\begin{cor}
 Suppose that $G$ is a commutative semigroup, $S$ is a commutative group, $n\in \mathbb{N}$ and $n\geq 1$.  
 If $A\colon G^{n}\to S$ is a symmetric, $n$-additive function, then for all $x, y\in G$
 \[
  \Delta^{n}_{y}A^{\ast}(x)=n!A^{\ast}(y).
\]
\end{cor}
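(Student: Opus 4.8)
The plan is to obtain this as an immediate specialization of the Polarization formula stated just above. The key observation is that the multi-index difference operator $\Delta_{y_{1}, \ldots, y_{m}}$ appearing in that theorem is, by definition, the composition $\Delta_{y_{1}} \circ \cdots \circ \Delta_{y_{m}}$ of single-variable difference operators. Hence, upon setting all the increments equal, $y_{1} = \cdots = y_{m} = y$, the operator collapses to the $m$-fold iterate $\Delta_{y}^{m}$ of $\Delta_{y}$. (It is worth recalling that the operators $\Delta_{y_{i}}$ commute, so this collapse is unambiguous and no ordering issue arises.)

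The proof then proceeds in two short steps. First, I would invoke the Polarization formula in the boundary case $m = n$, which yields $\Delta_{y_{1}, \ldots, y_{n}}A^{\ast}(x) = n!\,A(y_{1}, \ldots, y_{n})$ for all $x, y_{1}, \ldots, y_{n} \in G$; note in particular that the right-hand side is independent of $x$, which is consistent with the claim. Second, I would substitute $y_{1} = \cdots = y_{n} = y$. On the left this turns $\Delta_{y_{1}, \ldots, y_{n}}$ into $\Delta_{y}^{n}$ by the observation above, and on the right the value $A(y, \ldots, y)$ is precisely the diagonalization $A^{\ast}(y)$ by its very definition. Combining these gives
\[
 \Delta^{n}_{y}A^{\ast}(x) = n!\,A(y, \ldots, y) = n!\,A^{\ast}(y),
\]
as required.

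Honestly, there is no genuine obstacle here: the statement is a direct corollary and the argument is purely a matter of specializing indices. The only point requiring a moment of care is the notational identification $\Delta_{y, \ldots, y} = \Delta_{y}^{n}$, i.e. confirming that repeating the same increment in the symmetric difference operator reproduces the iterated operator; this is transparent once one unwinds the definition $\Delta_{y}f(x) = f(x+y) - f(x)$ and the fact that the theorem's $\Delta_{y_{1}, \ldots, y_{m}}$ is built by composition. No hypotheses beyond those already assumed for the Polarization formula ($G$ a commutative semigroup, $S$ a commutative group, $A$ symmetric and $n$-additive) are needed.
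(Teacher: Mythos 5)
Your proof is correct and matches the paper's intended derivation: the corollary is exactly the Polarization formula with $m=n$ and $y_{1}=\cdots=y_{n}=y$, using $\Delta_{y,\ldots,y}=\Delta_{y}^{n}$ and $A(y,\ldots,y)=A^{\ast}(y)$. The paper presents this as an immediate consequence without further argument, so there is nothing to add.
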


\begin{lem}
\label{mainfact}
  Let $n\in \mathbb{N}$, $n\geq 1$ and suppose that the multiplication by $n!$ is surjective in the commutative semigroup $G$ or injective in the commutative group $S$. Then for any symmetric, $n$-additive function $A\colon G^{n}\to S$, $A^{\ast}\equiv 0$ implies that 
  $A$ is identically zero, as well. 
\end{lem}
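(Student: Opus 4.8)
The plan is to read the vanishing of $A$ directly off the Polarization formula and then to cancel the resulting factor $n!$ by means of the hypothesis on $G$ or on $S$.

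First I would invoke the Polarization formula in the case $m=n$. Since $A$ is symmetric and $n$-additive, for all $x,y_{1},\ldots,y_{n}\in G$ it gives
\[
\Delta_{y_{1},\ldots,y_{n}}A^{\ast}(x)=n!\,A(y_{1},\ldots,y_{n}).
\]
By assumption $A^{\ast}\equiv 0$, so the left-hand side — being a finite alternating sum of values of $A^{\ast}$ — vanishes identically. Consequently
\[
n!\,A(y_{1},\ldots,y_{n})=0 \qquad \left(y_{1},\ldots,y_{n}\in G\right),
\]
where $n!\,A(\cdots)$ denotes the $n!$-fold sum of the element $A(y_{1},\ldots,y_{n})$ in the group $S$.

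Next I would remove the factor $n!$, splitting into the two cases of the hypothesis. If multiplication by $n!$ is injective in $S$, then from $n!\,A(y_{1},\ldots,y_{n})=0=n!\cdot 0$ I conclude at once that $A(y_{1},\ldots,y_{n})=0$. If instead multiplication by $n!$ is surjective in $G$, then given arbitrary $y_{1},\ldots,y_{n}\in G$ I would pick $z_{1}\in G$ with $n!\,z_{1}=y_{1}$ and use $n$-additivity in the first variable to write
\[
A(y_{1},y_{2},\ldots,y_{n})=A(n!\,z_{1},y_{2},\ldots,y_{n})=n!\,A(z_{1},y_{2},\ldots,y_{n})=0,
\]
the last equality being the identity just established, applied to the arguments $(z_{1},y_{2},\ldots,y_{n})$.

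In either case $A(y_{1},\ldots,y_{n})=0$ for all arguments, i.e. $A$ is identically zero. The argument is essentially immediate once the Polarization formula is available; the only point requiring care — and precisely the reason that the statement offers a hypothesis on $G$ together with an alternative one on $S$ — is that in a general semigroup or group one cannot simply divide by $n!$, so the cancellation of that factor must be justified by exactly one of the two stated surjectivity/injectivity assumptions. I expect this cancellation step to be the only genuine obstacle, the remainder of the proof being a direct application of the polarization identity.
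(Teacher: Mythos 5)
Your proof is correct and is exactly the argument the paper intends: the lemma is stated immediately after the Polarization formula precisely because it follows from the case $m=n$, with the factor $n!$ removed either by injectivity in $S$ or by pulling $n!$ out of the first argument via $n$-additivity after using surjectivity in $G$. The paper leaves this routine verification implicit, and your write-up fills it in faithfully, including the one step that genuinely needs the hypothesis.
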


The polarization formula plays the central role in the investigations of functional equations characterizing higher order derivations on a ring. 
This is another reason (see also Remark \ref{Rem1}) 
why some additional restrictions on the the ring appear in the results. 
They essentially correspond to the conditions for the multiplication by $n!$ in the domain as well as in the range in Lemma \ref{mainfact}. 

\section{Multivariate characterizations of higher order derivations}

In what follows we frequently use summation with respect to the cardinality of $I\subset \{1, \ldots, n+1\}$ as $I$ runs through the elements of the power set $2^{\{1, \ldots, n+1\}}$, where $n\in \mathbb{N}$. 
As another technical notation, we introduce the hat operator $\ \widehat{\textrm{\phantom{v}}}\ $ to delete arguments from multivariate expressions. 
Let $R$ be a commutative ring and consider the action of a second order derivation $A\in \mathscr{D}_{2}(R)$ 
on the product of three independent variables as a motivation of the forthcoming results:
 \begin{multline}
  A(x_{1}x_2x_3)-x_1A(x_2x_3)-A(x_1)x_2x_3=B(x_1, x_2x_3)=x_2B(x_1, x_3)+B(x_1, x_2)x_3
	\\
	=x_2\left(A(x_1x_3)-x_1A(x_3)-x_3A(x_1)\right)+\left(A(x_1x_2)-x_1A(x_2)-A(x_1)x_2 \right)x_3.
 \end{multline}

In general 
 \begin{equation}\label{higher}
  \sum_{i=0}^{n}(-1)^i\sum_{\mathrm{card}(I)=i}\left(\prod_{j\in I}x_{j}\right)\cdot 
 A\left(\prod_{k\in \left\{1, \ldots, n+1\right\}\setminus I}x_{k}\right)=0 \qquad (x_{1}, \ldots, x_{n+1}\in R)
 \end{equation}
as a simple inductive argument shows. Conversely suppose that equation \eqref{higher} is satisfied and let us define the (symmetric) biadditive mapping by
\[
B(x,y)=A(xy)-A(x)y-xA(y).
\]
An easy direct computation shows that $A$ satisfies equation \eqref{higher} with $n\in \mathbb{N}$ 
if and only if $B$ satisfies equation \eqref{higher} for each variable  with $n-1\in \mathbb{N}$, where $n\geq 1$. 
By a simple inductive argument we can formulate the following result.  

\begin{thm}\label{Phighmulti}
 Let $A\colon R\to R$ be an additive mapping, where $R$ is a commutative ring,  $n\in \mathbb{N}$ and $n\geq 1$.   $A\in \mathscr{D}_{n}(R)$ if and only if 
 \begin{equation}
  \sum_{i=0}^{n}(-1)^i\sum_{\mathrm{card}(I)=i}\left(\prod_{j\in I}x_{j}\right)\cdot 
 A\left(\prod_{k\in \left\{1, \ldots, n+1\right\}\setminus I}x_{k}\right)=0 \qquad (x_{1}, \ldots, x_{n+1}\in R).
 \end{equation}
\end{thm}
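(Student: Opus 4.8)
The plan is to induct on $n$, following the reduction sketched immediately before the statement. Throughout, for an additive map $g\colon R\to R$ and an integer $m\geq 1$ let me abbreviate the left-hand side of \eqref{higher} by
\[
E_m[g](x_1,\dots,x_{m+1})=\sum_{i=0}^{m}(-1)^i\sum_{\mathrm{card}(I)=i}\Bigl(\prod_{j\in I}x_{j}\Bigr)g\Bigl(\prod_{k\in\{1,\dots,m+1\}\setminus I}x_{k}\Bigr),
\]
so that the assertion becomes $A\in\mathscr{D}_n(R)\iff E_n[A]\equiv 0$. For $n=1$ the equation $E_1[A]\equiv 0$ is literally $A(x_1x_2)-x_1A(x_2)-x_2A(x_1)=0$, which by Remark \ref{pathologic} is equivalent to $A\in\mathscr{D}_1(R)$; this settles the base case.

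For the inductive step I assume the equivalence for $n-1$ and introduce the defect $B(x,y)=A(xy)-xA(y)-A(x)y$. Additivity of $A$ makes $B$ biadditive, and commutativity of $R$ makes it symmetric. The core of the argument is the purely algebraic identity
\[
E_n[A](x_1,\dots,x_{n+1})=E_{n-1}\!\left[B(\cdot,x_{n+1})\right](x_1,\dots,x_n)\qquad(x_1,\dots,x_{n+1}\in R),
\]
which I expect to hold unconditionally, i.e. for \emph{every} additive $A$, and which reduces the $(n{+}1)$-variable equation for $A$ to the $n$-variable equation for the section $z\mapsto B(z,x_{n+1})$.

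To verify the identity I would split the defining sum of $E_n[A]$ according to whether the index $n+1$ lies in $I$. Writing $y=x_{n+1}$, the terms with $n+1\notin I$ carry $y$ inside the argument of $A$, and there I substitute $A(yu)=B(u,y)+uA(y)+A(u)y$ for the genuine (nonempty) product $u$ of the remaining variables; the single boundary term in which the complement degenerates to $\{n+1\}$ contributes $A(y)$ directly, so no unit of $R$ is needed. Two families of unwanted terms then appear: those carrying the factor $A(y)$ collapse because their combined coefficient is $\sum_{I\subseteq\{1,\dots,n\}}(-1)^{|I|}=(1-1)^{n}=0$, while those carrying $A$ of a product of $x_1,\dots,x_n$ assemble into $E_{n-1}[A](x_1,\dots,x_n)\cdot y$, which is exactly cancelled by the contribution $-y\cdot E_{n-1}[A](x_1,\dots,x_n)$ coming from the terms with $n+1\in I$, using commutativity. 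What remains is precisely $E_{n-1}[B(\cdot,y)](x_1,\dots,x_n)$.

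Granting the identity, the induction closes cleanly: $E_n[A]\equiv 0$ holds iff $E_{n-1}[B(\cdot,y)]\equiv 0$ for every fixed $y$, iff (induction hypothesis) $B(\cdot,y)\in\mathscr{D}_{n-1}(R)$ for every $y$; by symmetry of $B$ the same then holds in the second variable, so $B$ is a bi-derivation of order $n-1$. Since $B$ is the unique function representing the defect $A(xy)-xA(y)-A(x)y$, the definition of higher order derivations yields $A\in\mathscr{D}_n(R)$, and the converse implication runs along the same chain of equivalences. I expect the only real obstacle to be the bookkeeping in the displayed identity—particularly isolating the degenerate boundary term and checking that the $A(y)$-terms and the $E_{n-1}[A]$-terms are exactly the ones removed by the binomial cancellation and by commutativity, respectively.
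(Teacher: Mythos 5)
Your proposal is correct and follows essentially the same route as the paper: the paper's (sketched) argument also defines the defect $B(x,y)=A(xy)-xA(y)-A(x)y$ and reduces equation \eqref{higher} for $A$ with $n$ variables free to equation \eqref{higher} for $B(\cdot,y)$ with $n-1$, closing by induction. Your displayed identity and the two cancellation mechanisms (the binomial collapse of the $A(y)$-terms and the cancellation of $y\cdot E_{n-1}[A]$ against the terms with $n+1\in I$) are exactly the "easy direct computation" the paper leaves to the reader, and they check out.
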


As a generalization of the previous result we admit more general coefficients in equation (\ref{higher}). Some additional requirements for the ring $R$ should be also formulated.

\begin{dfn} A commutative unitary ring $R$ is called {\emph{linear}} if it is a linear space over the field of rationals.
\end{dfn}

\begin{rem}
\label{ringconditions}
It can be easily seen that a linear ring admits the multiplication 
of the ring elements by fractions such as $1/2, \ldots, 1/n, \ldots$. 
Using the linear space properties the field of the rationals $\mathbb{Q}$ can be isomorphically embedded into $R$ as a subring. Therefore each element  
\[
\mathbf{1},\ \mathbf{2}=\mathbf{1}+\mathbf{1}, \ldots, \mathbf{n}=\underbrace{\mathbf{1}+\ldots+\mathbf{1}}_{\text{$n$-times}}
\]
is invertible, where $\mathbf{1}$ is the unity of the ring, $n\in \mathbb{N}$ and $n\geq 1$. 
Hence the multiplication by $n!$ is injective in $R$ as the domain of the higher order derivations (cf.~Lemma \ref{mainfact}). 
Some typical examples for linear rings: fields, rings formed by matrices over a field, polynomial rings over a field. 
Another natural candidates are the integral domains. 
Since each cancellative semigroup $G$ can be isomorphically embedded into a group we have that each integration domain $R$ 
can be isomorphically embedded into a field $\mathbb{F}$ as a subring. In the sense of Definition \ref{D2.1.1} we can take $\mathbb{F}$ 
as the range of derivations on the subring $R$. 
Such an extension of $R$ provides the multiplication by $n!$ to be obviously surjective in the range $\mathbb{F}$, 
assuming that the characteristic is zero (cf.~Lemma \ref{mainfact}). 
\end{rem}

In what follows we do not use special notation for the unity of the ring. The meaning of $1$ depends on the context as usual. 

\begin{thm}
\label{Phighmulti1}
 Let $A\colon R\to R\subset \mathbb{F}$ be an additive mapping satisfying $A(1)=0$, where $R$ is an integral domain with unity, 
 $\mathbb{F}$ is its embedding field, assuming that $\mathrm{char}(\mathbb{F})=0$ and  $n\in \mathbb{N}$. 
 $A\in \mathscr{D}_{n}(R)$ if and only if there exist constants $a_{1}, \ldots, a_{n+1}\in \mathbb{F}$, not all zero, such that 
 \begin{equation}
\label{gen_singlemultivariate}
\sum_{i=0}^{n}\dfrac{a_{n+1-i}}{\binom{n+1}{i}}\sum_{\mathrm{card}(I)=i}\left(\prod_{j\in I}x_{j}\right)\cdot 
 A\left(\prod_{k\in \left\{1, \ldots, n+1\right\}\setminus I}x_{k}\right)=0,
\end{equation}
where $x_{1}, \ldots, x_{n+1}\in R$.  

Especially, $\displaystyle\sum_{i=1}^{n+1}ia_{i}=0$ provided that $A$ is a not identically zero solution.  
\end{thm}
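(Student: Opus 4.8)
My plan is to split the statement into three parts: necessity (``only if''), sufficiency (``if''), and the closing identity $\sum_{i=1}^{n+1}ia_i=0$. Necessity is the quick part. If $A\in\mathscr{D}_n(R)$, then Theorem~\ref{Phighmulti} guarantees that \eqref{higher} holds, and putting $a_{n+1-i}=(-1)^i\binom{n+1}{i}$ --- a choice that is visibly not all zero --- turns the coefficient $\frac{a_{n+1-i}}{\binom{n+1}{i}}$ into $(-1)^i$, so that \eqref{gen_singlemultivariate} becomes literally \eqref{higher}. The substance of the theorem is therefore the converse together with the trace identity, and both will be obtained by substituting the unit $1$ into some of the free variables and exploiting $A(1)=0$ together with the full symmetry of \eqref{gen_singlemultivariate}.

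For the identity $\sum_{i=1}^{n+1}ia_i=0$ I would set $x_2=\cdots=x_{n+1}=1$. A subset $I\ni 1$ forces $A(1)=0$ in the argument, so only the subsets $I\subseteq\{2,\dots,n+1\}$ survive, each contributing $\frac{a_{n+1-|I|}}{\binom{n+1}{|I|}}A(x_1)$. Collecting the subsets of size $i$ produces the factor $\binom{n}{i}$, and since $\frac{\binom{n}{i}}{\binom{n+1}{i}}=\frac{n+1-i}{n+1}$, the whole expression equals $\big(\sum_{i=0}^{n}\frac{n+1-i}{n+1}\,a_{n+1-i}\big)A(x_1)$. For a solution $A\not\equiv 0$ one may choose $x_1$ with $A(x_1)\neq 0$, so the scalar must vanish; reindexing $m=n+1-i$ gives exactly $\sum_{m=1}^{n+1}m\,a_m=0$.

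The converse is the core, and I would prove it by induction on $n$, assuming harmlessly that $A\not\equiv0$ (since $A\equiv0$ lies in every $\mathscr{D}_n(R)$). The case $n=1$ is a direct computation: substituting $x_2=1$ collapses \eqref{gen_singlemultivariate} to $(a_2+\tfrac{a_1}{2})A(x_1)=0$, forcing $a_1+2a_2=0$, and feeding this back leaves $A(x_1x_2)=x_1A(x_2)+x_2A(x_1)$. For the step I would substitute $x_{n+1}=1$ and sort the subsets $I\subseteq\{1,\dots,n+1\}$ according to $J=I\cap\{1,\dots,n\}$ and according to whether $n+1\in I$; the two possibilities contribute to the same reduced term $\big(\prod_{j\in J}x_j\big)A\big(\prod_{k\in\{1,\dots,n\}\setminus J}x_k\big)$, and once the top term ($J=\{1,\dots,n\}$) drops out through $A(1)=0$, the outcome is precisely \eqref{gen_singlemultivariate} for the parameter $n-1$, with coefficients $b_1,\dots,b_n$ determined by $\frac{b_{n-\ell}}{\binom{n}{\ell}}=\frac{a_{n-\ell}}{\binom{n+1}{\ell+1}}+\frac{a_{n+1-\ell}}{\binom{n+1}{\ell}}$. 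A dichotomy then closes the induction: if some $b_i\neq0$, the induction hypothesis yields $A\in\mathscr{D}_{n-1}(R)\subseteq\mathscr{D}_n(R)$; if all $b_i=0$, the resulting recursion $a_j=-\frac{j+1}{n+1-j}a_{j+1}$ forces $a_j$ to be proportional to $(-1)^{n+1-j}\binom{n+1}{j}$ (the proportionality constant $a_{n+1}$ being nonzero, since otherwise all $a_i$ would vanish), so that \eqref{gen_singlemultivariate} is a nonzero scalar multiple of \eqref{higher} and Theorem~\ref{Phighmulti} gives $A\in\mathscr{D}_n(R)$.

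I expect the main obstacle to be the combinatorial bookkeeping of the substitution $x_{n+1}=1$: one must verify carefully that the two families of subsets (those containing $n+1$ and those not) recombine into a single clean equation of lower order, and in particular that the normalisations $\binom{n+1}{\cdot}$ reorganise into $\binom{n}{\cdot}$ exactly as required. The accompanying small point --- that the vanishing of all reduced coefficients $b_i$ is equivalent to the binomial proportionality of the $a_j$ --- is a short computation with the recursion above, but it is the hinge that identifies \eqref{gen_singlemultivariate} with \eqref{higher} in the terminal case, and it must be checked with the correct index bookkeeping at both ends $\ell=0$ and $\ell=n-1$.
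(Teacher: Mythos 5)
Your proposal is correct and follows essentially the same route as the paper: the trace identity via substituting $1$ into $n$ of the variables, induction on $n$ with the $n=1$ base case, the substitution $x_{n+1}=1$ producing the reduced equation with coefficients $\tilde a_{n-\ell}=\binom{n}{\ell}\bigl(\frac{a_{n-\ell}}{\binom{n+1}{\ell+1}}+\frac{a_{n+1-\ell}}{\binom{n+1}{\ell}}\bigr)$, and the same dichotomy (induction hypothesis versus vanishing coefficients forcing $a_j=(-1)^{n+1-j}\binom{n+1}{j}a_{n+1}$ with $a_{n+1}\neq 0$, then Theorem~\ref{Phighmulti}). The only cosmetic difference is that the paper reuses the trace identity $a_1+2a_2=0$ in the base case rather than re-deriving it, and your division by $a_2$ there deserves the same explicit "otherwise all coefficients vanish" remark you make for $a_{n+1}$.
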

\begin{proof}
If $A\in \mathscr{D}_{n}(R)$ then we have equation \eqref{gen_singlemultivariate} with 
\[
 a_{n+1-i}=(-1)^{i}\binom{n+1}{i} 
 \qquad 
 \left(i=0, \ldots, n\right); 
\]
none of the constants $a_{1}, \ldots, a_{n+1}$ is zero and $\displaystyle\sum_{i=1}^{n+1}ia_{i}=0$. 
Conversely, let $n\in \mathbb{N}$, $n\geq 1$ and $A\colon R\to R\subset \mathbb{F}$ be an additive function such that $A(1)=0$. 
Suppose that equation \eqref{gen_singlemultivariate} holds for all $x_{1}, \ldots, x_{n+1}\in R$. Substituting 
$$x_1=\ldots =x_n=1\ \ \textrm{and}\ \ x_{n+1}=x\in R$$
it follows that 
\[
 \left(\sum_{i=1}^{n+1}ia_{i}\right)\cdot A(x)=0
 \qquad 
 \left(x\in R\right),
\]
i.e. $\displaystyle \sum_{i=1}^{n+1}ia_{i}=0$ provided that $A$ is a not identically zero solution of equation (\ref{gen_singlemultivariate}). If $n=1$, then it takes the form  
\[
 2 a_{2}A(xy)+a_{1}\left(xA(y)+yA(x)\right)=0 
 \qquad 
 \left(x, y\in R\right). 
\]
Since $2a_{2}+a_{1}=0$, it follows that $a_2\neq 0$. Otherwise $0=a_2=a_1$ which is a contradiction. Therefore  
\[
 a_{2}A(xy)-a_{2}\left(xA(y)+yA(x)\right)=0 
 \qquad 
 \left(x, y\in R\right) 
\]
and 
\[
 A(xy)=xA(y)+yA(x)
 \qquad 
 \left(x, y\in R\right).
\]
This means that $A\in \mathscr{D}_{1}(R)$, i.e. the statement holds for $n=1$. The inductive argument can be completed as follows. Taking $x_{n+1}=1$ equation (\ref{gen_singlemultivariate}) gives that
 \begin{multline*}
0=\sum_{i=1}^{n} \dfrac{a_{n+1-i}}{\binom{n+1}{i}} \sum_{n+1\in I}\sum_{\mathrm{card}(I)=i} \left(\prod_{j\in I\setminus \{n+1\}}x_{j}\right)\cdot 
 A\left(\prod_{k\in \left\{1, \ldots, n\right\}\setminus (I\setminus \{n+1\})}x_{k}\right)
\\
+\sum_{i=0}^{n-1} \dfrac{a_{n+1-i}}{\binom{n+1}{i}} \sum_{n+1 \notin I}\sum_{\mathrm{card}(I)=i}\left(\prod_{j\in I}x_{j}\right)\cdot 
 A\left(\prod_{k\in \left\{1, \ldots, n\right\}\setminus I}x_{k}\right)+\dfrac{a_{1}}{\binom{n+1}{n}}\left(\prod_{j\in \{1, \ldots, n\}}x_{j}\right) \cdot A(1)
\\
=\sum_{i=0}^{n-1}\left( \dfrac{a_{n+1-(i+1)}}{\binom{n+1}{i+1}}+ \dfrac{a_{n+1-i}}{\binom{n+1}{i}}\right) \sum_{\mathrm{card}(I)=i}\left(\prod_{j\in I}x_{j}\right)\cdot 
 A\left(\prod_{k\in \left\{1, \ldots, n\right\}\setminus I}x_{k}\right)
 \\
 =\sum_{i=0}^{n-1} \dfrac{\tilde{a}_{(n-1)+1-i}}{\binom{n-1+1}{i}}
\sum_{\mathrm{card}(I)=i}\left(\prod_{j\in I}x_{j}\right)\cdot 
 A\left(\prod_{k\in \left\{1, \ldots, (n-1)+1\right\}\setminus I}x_{k}\right) \qquad (x_1, \ldots, x_n\in R) 
\end{multline*}
because $A(1)=0$. We have that $A\in \mathscr{D}_{n-1}(R) \subset \mathscr{D}_{n}(R)$ (inductive hypothesis) or all the coefficients are zero, 
that is, 
\[
 \tilde{a}_{(n-1)+1-i}=\binom{n-1+1}{i}\left(\dfrac{a_{n+1-(i+1)} }{\binom{n+1}{i+1}}+\dfrac{a_{n+1-i}}{\binom{n+1}{i}} \right)=0 
 \qquad 
 \left(0\leq i\leq n-1\right)
\]
and, consequently,  
\[
 (n+1-i)a_{n+1-i}+(i+1)a_{n+1-(i+1)}=0
 \qquad 
 \left(0\leq i\leq n-1\right).
\]
Therefore 
\begin{equation}
\label{recursion}
 a_{n+1-(i+1)}=(-1)^{i+1}\binom{n+1}{i+1}a_{n+1}
 \qquad 
 \left(0\leq i\leq n-1\right). 
\end{equation}
Substituting into \eqref{gen_singlemultivariate} 
\[
 a_{n+1}\cdot \sum_{i=0}^{n}(-1)^{i}\sum_{\mathrm{card}(I)=i} \left(\prod_{j\in I}x_{j}\right)\cdot 
 A\left(\prod_{k\in \left\{1, \ldots, n+1\right\}\setminus I}x_{k}\right)=0 
 \qquad 
 \left(x\in R\right).
\]
This means that $A\in \mathscr{D}_{n}(R)$ since $a_{n+1}$ can not be zero due to the recursive formula \eqref{recursion}
and the condition to provide the existence of a nonzero element among the constants $a_1, \ldots, a_{n+1}$.
\end{proof}

\begin{rem}\label{rem1}
 The condition for $R$ to be an integral domain with unity embeddable into a field $\mathbb{F}$ with characteristic zero, 
 is used only in the last step of the 
 proof because of the  division by $a_{n+1}$. If the coefficients are supposed to be rationals, then the 
 previous statement holds for a linear commutative ring with unity, see the next theorem. 
 \end{rem}

\begin{thm}\label{Thm_mainmultivariate}
 Let $f_{1}, \ldots, f_{n+1}\colon\allowbreak R\to R$ be additive functions such that $f_{i}(1)=0$ for all $i=1, \ldots, n+1$, where $R$ is a linear commutative ring with unity, $n\in \mathbb{N}$ and $n\geq 1$. Functional equation 
 \begin{equation}\label{Eq7multivariate}
\displaystyle\sum_{i=0}^{n}\dfrac{1}{\binom{n+1}{i}}\sum_{\mathrm{card}(I)=i}\left(\prod_{j\in I}x_{j}\right)\cdot f_{n+1-i}\left(\prod_{k\in \left\{1, \ldots, n+1\right\}\setminus I}x_{k}\right)
=0
 \;
  \left(x_1, \ldots, x_{n+1}\in R\right)
 \end{equation}
holds if and only if 
\[
 f_{n+1-i}=(-1)^{i}\sum_{k=0}^{i}\binom{n+1-i+k}{k}D_{n-i+k} 
 \qquad 
 \left(i=0, \ldots, n\right), 
\]
where for all possible indices $i$, we have $D_{i}\in \mathscr{D}_{i}(R)$. 
\end{thm}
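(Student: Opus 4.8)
The plan is to prove the two implications separately: the \emph{sufficiency} (that every tuple of the stated form solves \eqref{Eq7multivariate}) by a direct computation that collapses to one binomial identity, and the \emph{necessity} by induction on $n$, driven by the substitution $x_{n+1}=1$ exactly as in the proof of Theorem \ref{Phighmulti1}. Throughout it is convenient to reindex by $j=n+1-i$ and write the claimed solution as
\[
f_{j}=(-1)^{n+1-j}\sum_{\mu=j-1}^{n}\binom{\mu+1}{j}D_{\mu}\qquad (j=1,\ldots,n+1),
\]
so that $D_{\mu}\in\mathscr{D}_{\mu}(R)$ contributes to $f_{1},\ldots,f_{\mu+1}$ only, and $f_{n+1}=D_{n}$.

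For sufficiency, both \eqref{Eq7multivariate} and the displayed formula are linear in $(D_{1},\ldots,D_{n})$, so by linearity and the symmetry of the variables it suffices to treat a single $D_{m}$, i.e. $f_{j}=(-1)^{n+1-j}\binom{m+1}{j}D_{m}$. The top case $m=n$ is immediate: then $\tfrac{1}{\binom{n+1}{i}}f_{n+1-i}=(-1)^{i}D_{n}$, so the equation collapses to the order-$n$ identity of Theorem \ref{Phighmulti}, which $D_{n}$ satisfies. For $m<n$ I would expand $D_{m}$ on a product by the generalized Leibniz rule for derivations of order $m$ (producing symmetric multiadditive components $\beta_{1}=D_{m},\beta_{2},\ldots,\beta_{m}$); collecting the terms $U_{k}=\sum_{|P|=k}\beta_{k}\big((x_{l})_{l\in P}\big)\prod_{l\notin P}x_{l}$, which are independent of the size $s$ of the product on which $D_m$ acts, reduces the whole left-hand side to $\sum_{k=1}^{m}C_{k}U_{k}$ with scalar coefficients
\[
C_{k}=\sum_{s}(-1)^{s}\frac{\binom{m+1}{s}\binom{n+1-k}{s-k}}{\binom{n+1}{s}}.
\]
Using $\binom{m+1}{s}/\binom{n+1}{s}=\binom{n+1-s}{n-m}/\binom{n+1}{n-m}$ and the Vandermonde-type evaluation $\sum_{r}(-1)^{r}\binom{N}{r}\binom{N-r}{d}=\binom{N}{d}(1-1)^{N-d}$ with $N=n+1-k$ and $d=n-m$, each $C_{k}$ vanishes because $N>d$ for $1\le k\le m$. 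This settles sufficiency.

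For necessity I argue by induction on $n$; the case $n=1$ is the direct $\mathscr{D}_{1}(R)$ computation. Assume the statement for $n-1$. Putting $x_{n+1}=1$ in \eqref{Eq7multivariate} and using $f_{i}(1)=0$, the splitting of the index set $I$ according to whether it contains $n+1$ shows (after comparing coefficients, with $\binom{n}{i}/\binom{n+1}{i+1}=\tfrac{i+1}{n+1}$ and $\binom{n}{i}/\binom{n+1}{i}=\tfrac{n+1-i}{n+1}$) that the averaged functions
\[
g_{j}=\frac{(n+1-j)f_{j}+(j+1)f_{j+1}}{n+1}\qquad (j=1,\ldots,n)
\]
satisfy the $(n-1)$-analogue of \eqref{Eq7multivariate} and vanish at $1$. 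By the inductive hypothesis $g_{j}=(-1)^{n-j}\sum_{\mu=j-1}^{n-1}\binom{\mu+1}{j}D'_{\mu}$ for suitable $D'_{\mu}\in\mathscr{D}_{\mu}(R)$; set $D_{\mu}=-\tfrac{n+1}{n-\mu}D'_{\mu}\in\mathscr{D}_{\mu}(R)$ for $\mu=1,\ldots,n-1$, division by the positive integers $n-\mu$ being legitimate since $R$ is linear.

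It remains to produce the top derivation. Let $\widehat{f}_{j}=(-1)^{n+1-j}\sum_{\mu=1}^{n-1}\binom{\mu+1}{j}D_{\mu}$ be the contribution of the derivations just found; by the already-proved sufficiency $\widehat{f}$ solves \eqref{Eq7multivariate}, hence so does $t:=f-\widehat{f}$. A short computation shows that $\widehat{f}$ reproduces exactly the averages $g_{j}$, so $t$ has vanishing averages, i.e. $(n+1-j)t_{j}+(j+1)t_{j+1}=0$; solving this recursion downward from $t_{n+1}=f_{n+1}$ gives $t_{j}=(-1)^{n+1-j}\binom{n+1}{j}f_{n+1}$. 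Feeding this back into \eqref{Eq7multivariate} turns it, just as in the case $m=n$, into the order-$n$ derivation identity for $f_{n+1}$, so Theorem \ref{Phighmulti} yields $f_{n+1}=:D_{n}\in\mathscr{D}_{n}(R)$, and $f=\widehat{f}+t$ is the claimed expression. I expect this last step to be the main obstacle: the substitution $x_{n+1}=1$ is lossy and by itself detects only the averaged equation, so it cannot see the top-order information carried by $f_{n+1}$. The device that overcomes this is to peel off the lower-order part $\widehat{f}$ (whose membership in the solution set comes from sufficiency) and to recognize the remainder as a scalar multiple of $f_{n+1}$, which then feeds Theorem \ref{Phighmulti}; the generalized Leibniz expansion behind sufficiency is the only other technical point, and it collapses cleanly to $(1-1)^{N-d}=0$.
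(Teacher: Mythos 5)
Your proof is correct, and its skeleton coincides with the paper's: induction on $n$ with the same base case, the substitution $x_{n+1}=1$ producing the averaged tuple (your $g_j$, the paper's $\tilde f_j=(n+1)g_j$) that satisfies the $(n-1)$-analogue, the inductive hypothesis applied to it, and a final appeal to Theorem \ref{Phighmulti} to conclude $f_{n+1}\in\mathscr{D}_n(R)$. Where you genuinely diverge is in how the lower-order part is eliminated -- the step you rightly call the main obstacle. The paper expresses $f_1,\ldots,f_n$ via the explicit recursion \eqref{Eq8multivariate} in terms of $f_{n+1}$ and the derivations $\widetilde D_l$, substitutes back into \eqref{Eq7multivariate}, and verifies that each $\widetilde D_l$-term cancels by regrouping the sums into instances of the Theorem \ref{Phighmulti} identity for $\widetilde D_l$ on $(l+1)$-element subsets of the variables; the converse is then dismissed as ``a straightforward calculation.'' You instead prove the converse (sufficiency) first, via a generalized Leibniz expansion and the evaluation $\sum_r(-1)^r\binom{N}{r}\binom{N-r}{d}=\binom{N}{d}(1-1)^{N-d}=0$, and then feed it into necessity: subtracting the known solution $\widehat f$ leaves a remainder with vanishing averages, and the two-term recursion $(n+1-j)t_j+(j+1)t_{j+1}=0$ immediately gives $t_j=(-1)^{n+1-j}\binom{n+1}{j}f_{n+1}$. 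Your organization is more modular, actually supplies the converse the paper omits, and replaces the paper's heaviest coefficient bookkeeping with a trivial recursion; the paper's version buys self-containedness, since its cancellation argument invokes nothing beyond Theorem \ref{Phighmulti}. (Your rescaling $D_\mu=-\tfrac{n+1}{n-\mu}D'_\mu$ matches the paper's $D_{n-(k+1)}=-\widetilde D_{n-(k+1)}/(k+1)$, and is legitimate exactly because $R$ is linear and $\mathscr{D}_\mu(R)$ is closed under rational scaling.)

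One point should not be left as an assertion: the generalized Leibniz expansion of $D_m(x_1\cdots x_s)$ into symmetric multiadditive components $\beta_1=D_m,\beta_2,\ldots,\beta_m$ that are \emph{independent of $s$}, with the chain terminating at $k=m$. This is true but requires its own induction (symmetry of the iterated defects, independence of the number of factors), and the termination $\beta_{m+1}\equiv 0$ is precisely where the hypothesis $D_m\in\mathscr{D}_m(R)$ enters -- without it your coefficient computation would prove too much, since $C_{m+1}\neq 0$. The cleanest repair is to define $\beta_k$ by inclusion--exclusion,
\[
\beta_k(x_1,\ldots,x_k)=\sum_{J\subset\{1,\ldots,k\}}(-1)^{k-\mathrm{card}(J)}\left(\prod_{j\notin J}x_j\right)D_m\left(\prod_{j\in J}x_j\right),
\]
which is manifestly symmetric, and to note that $\beta_{m+1}\equiv 0$ is exactly the statement of Theorem \ref{Phighmulti} for $D_m$; alternatively you can bypass the expansion altogether by the paper's regrouping trick, which proves the same vanishing using only Theorem \ref{Phighmulti}.
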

\begin{proof}
If $n=1$ then the equation takes the form
\[
2f_2(x_1x_2)+x_1f_1(x_2)+x_2f_{1}(x_1)=0.
\]
Substituting $x_1=x$ and $x_2=1$ we have that
\[
2f_2(x)+f_1(x)=0
\]
and, consequently,
\[
f_2(x_1x_2)-x_1f_2(x_2)-x_2f_{2}(x_1)=0.
\]
This means that $f_2=D_1\in \mathscr{D}_{1}(R)$ and $f_1=-D_0-2D_1$, where $D_0\in \mathscr{D}_{0}(R)$ is the identically zero function. The converse of the statement is clear under the choice $f_2=D_1$ and $f_1=-D_0-2D_1$. The inductive argument is the same as in the proof of the previous theorem by the formal identification $f_{n+1-i}=a_{n+1-i}A.$ We have
 \begin{equation}
0=\sum_{i=0}^{n-1}\dfrac{1}{\binom{n-1+1}{i}}\sum_{\mathrm{card}(I)=i}\left(\prod_{j\in I}x_{j}\right)\cdot \tilde{f}_{(n-1)+1-i}\left(\prod_{k\in \left\{1, \ldots, n\right\}\setminus I}x_{k}\right),
\end{equation}
where
$$\tilde{f}_{(n-1)+1-i}=(i+1)f_{n+1-(i+1)}+ (n+1-i)f_{n+1-i} \qquad (0\leq i \leq n-1).$$
Using the inductive hypothesis 
\begin{equation}
 (n+1-i)f_{n+1-i}+(i+1)f_{n+1-(i+1)}=(-1)^{i}\sum_{k=0}^{i}\binom{n-i+k}{k}\widetilde{D}_{(n-1)-i+k},
\end{equation}
where $\widetilde{D_{i}}\in \mathscr{D}_{i}(R)$ for all possible indices. Especially, all the unknown functions $f_1, \ldots, f_{n}$ can be 
expressed in terms of $f_{n+1}$ and $\widetilde{D}_{0}, \ldots, \widetilde{D}_{n-1}$ in a recursive way:
\begin{equation}\label{Eq8multivariateversion1}
 f_{n+1-(i+1)}= (-1)^{i}\left\{\left(\sum_{k=0}^{i}\binom{n-k}{i-k}\dfrac{\widetilde{D}_{n-(k+1)}}{k+1}\right)-\binom{n+1}{i+1}f_{n+1}\right\},
\end{equation}
where $i=0, \ldots, n-1$. Rescaling the indices, 
\begin{multline}\label{Eq8multivariate}
 f_{n+1-i}= f_{n+1-(i-1+1)}
 \\
 =(-1)^{i-1}\left\{\left(\sum_{k=0}^{i-1}\binom{n-k}{i-1-k}\dfrac{\widetilde{D}_{n-(k+1)}}{k+1}\right)-\binom{n+1}{i}f_{n+1}\right\}, 
\end{multline}
where $i=1, \ldots, n$. Substituting into equation (\ref{Eq7multivariate}) 
\begin{equation*}
\displaystyle\sum_{i=0}^{n}(-1)^i\sum_{\mathrm{card}(I)=i}\left(\prod_{j\in I}x_{j}\right)\cdot f_{n+1}\left(\prod_{k\in \left\{1, \ldots, n+1\right\}\setminus I}x_{k}\right)=0,
\end{equation*}
because the term containing $\widetilde{D}_{l}$ is of the form 
\begin{multline*}
 \displaystyle\sum_{i=n-l}^{n}\frac{(-1)^{i-1}}{\binom{n+1}{i}} \binom{l+1}{i-n+l}\sum_{\mathrm{card}(I)=i}\left(\prod_{j\in I}x_{j}\right)
\cdot \dfrac{\widetilde{D}_{l}}{n-l}\left(\prod_{k\in \left\{1, \ldots, n+1\right\}\setminus I}x_{k}\right)
\\
=
\dfrac{1}{\binom{n+1}{n-l}}\displaystyle\sum_{i=n-l}^{n}(-1)^{i-1}\binom{i}{n-l}\sum_{\mathrm{card}(I)=i}\left(\prod_{j\in I}x_{j}\right)
\cdot \dfrac{\widetilde{D}_{l}}{n-l}\left(\prod_{k\in \left\{1, \ldots, n+1\right\}\setminus I}x_{k}\right)
\\
=
\dfrac{(-1)^{n-l-1}}{\binom{n+1}{n-l}}\displaystyle\sum_{m=0}^{l}(-1)^{m}\binom{m+n-l}{n-l}
\\
\cdot\sum_{\mathrm{card}(I)=n-l+m}\left(\prod_{j\in I}x_{j}\right)\cdot \dfrac{\widetilde{D}_{l}}{n-l}\left(\prod_{k\in \left\{1, \ldots, n+1\right\}\setminus I}x_{k}\right)
\\
=\dfrac{(-1)^{n-l-1}}{\binom{n+1}{n-l}}
\displaystyle\sum_{1\leq j_1 < \ldots < j_{l+1}\leq n+1}\left(\prod_{k\in \left\{ 1, \ldots, n+1\right\}\setminus \left\{j_{1}, \ldots, j_{l+1}\right\}}x_{k}\right)
\\
\cdot \displaystyle\sum_{m=0}^{l}(-1)^m\sum_{\mathrm{card}(J)=m}
\left(\prod_{j\in J}x_{j}\right)\cdot \dfrac{\widetilde{D}_{l}}{n-l}\left(\prod_{k\in \left\{j_1, \ldots, j_{l+1}\right\}\setminus J}x_{k}\right)=0, 
\end{multline*}
due to Theorem \ref{Phighmulti}. 

Finally, let 
\[
 D_{n}= f_{n+1} 
 \quad 
 \text{and}
 \quad 
 D_{n-(k+1)}=-\dfrac{\widetilde{D}_{n-(k+1)}}{k+1}
 \qquad
 \left(k=0, \ldots, n-1\right). 
\]
Taking $m=i-1-k$ in formula \eqref{Eq8multivariate} it follows that
\begin{multline*}
f_{n+1-i}=(-1)^{i-1}\left\{\left(\sum_{m=0}^{i-1}\binom{n+1-i+m}{m}\dfrac{\widetilde{D}_{n-i+m}}{i-m}\right)-\binom{n+1}{i}f_{n+1}\right\}
\\
=(-1)^{i}\left\{\left(\sum_{m=0}^{i-1}\binom{n+1-i+m}{m}D_{n-i+m}\right)+\binom{n+1}{i}f_{n+1}\right\}
\\
=(-1)^{i}\sum_{m=0}^{i}\binom{n+1-i+m}{m}D_{n-i+m}
\end{multline*}
as had to be proved. 
The converse statement is a straightforward calculation. 
\end{proof}

\begin{rem}
Theorem \ref{Thm_mainmultivariate} can be also stated in case of integral domains provided that the range of the functions 
$f_1, \ldots, f_{n+1}$ is extended to a field of characteristic zero, containing $R$ as a subring; see Remark \ref{ringconditions}. 
The proof works without any essential modification. 
\end{rem}

\section{Univariate characterizations of higher order derivations}

Each multivariate characterization implies an univariate characterization of 
the higher order derivations under some mild conditions on the ring $R$ due to Lemma \ref{mainfact}. 
The ring $R$ is supposed to be a linear commutative, unitary ring or an integral domain embeddable to a field of characteristic zero; 
see also Remark \ref{ringconditions}. 
The following results can be found in \cite{Eba15}, \cite{Eba17} but the proofs are essentially different. 
We present them as direct consequences of the multivariate characterizations by taking the diagonalization of the formulas. 
This provides unified arguments of specific results but we can also use the idea as a 
general method to solve functional equations of the form \eqref{Ebanks}; see Examples I and Examples II. 

\begin{cor}\label{Prop3}
 Let $A\colon R\to R$ be an additive mapping, where $R$ is a linear commutative,
unitary ring or an integral domain with unity, embeddable to a field
 of characteristic zero and 
 $n\in \mathbb{N}$ and $n\geq 1$. Then $A\in \mathscr{D}_{n}(R)$ if and only if 
 \begin{equation}\label{high_single}
  \sum_{i=0}^{n}(-1)^{i}\binom{n+1}{i}x^{i}A\left(x^{n+1-i}\right)=0 
  \qquad 
  \left(x\in R\right). 
 \end{equation}
\end{cor}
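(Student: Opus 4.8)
The plan is to derive both implications from the multivariate characterization in Theorem~\ref{Phighmulti} by taking the diagonalization of the $(n+1)$-variable identity. Consider the multivariate expression
\[
F(x_{1}, \ldots, x_{n+1})=\sum_{i=0}^{n}(-1)^i\sum_{\mathrm{card}(I)=i}\left(\prod_{j\in I}x_{j}\right)\cdot A\left(\prod_{k\in \left\{1, \ldots, n+1\right\}\setminus I}x_{k}\right).
\]
Setting $x_{1}=\cdots=x_{n+1}=x$, each subset $I$ with $\mathrm{card}(I)=i$ contributes $x^{i}A\left(x^{n+1-i}\right)$, and there are $\binom{n+1}{i}$ such subsets, so the diagonalization is
\[
F^{\ast}(x)=\sum_{i=0}^{n}(-1)^{i}\binom{n+1}{i}x^{i}A\left(x^{n+1-i}\right),
\]
which is exactly the left-hand side of \eqref{high_single} (note that the summation stopping at $i=n$ is what prevents any $A(1)$ term from appearing). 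This makes the ``only if'' direction immediate: if $A\in \mathscr{D}_{n}(R)$, then $F\equiv 0$ by Theorem~\ref{Phighmulti}, hence $F^{\ast}\equiv 0$, which is \eqref{high_single}.

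For the converse I would first verify that $F$ is a symmetric $(n+1)$-additive function. Additivity in each variable $x_{\ell}$ is routine: if $\ell\in I$ then $x_{\ell}$ enters only the product $\prod_{j\in I}x_{j}$ as a single factor, while if $\ell\notin I$ then $x_{\ell}$ enters only the argument of $A$, where one uses the additivity of $A$ together with distributivity to split $A\left((x_{\ell}+x_{\ell}')\cdots\right)=A\left(x_{\ell}\cdots\right)+A\left(x_{\ell}'\cdots\right)$. Symmetry follows because a permutation $\sigma$ of the indices carries the summand attached to $I$ to the summand attached to $\sigma(I)$, and the inner sum already runs over all subsets of a fixed cardinality.

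The key step is the passage from the vanishing of the diagonal $F^{\ast}$ back to the vanishing of $F$ itself, and this is where the hypotheses on $R$ enter. Assuming \eqref{high_single}, we have $F^{\ast}\equiv 0$ for the symmetric $(n+1)$-additive map $F$. I would invoke Lemma~\ref{mainfact} with its parameter equal to $n+1$: it suffices that multiplication by $(n+1)!$ be injective in the range or surjective in the domain. For a linear commutative unitary ring this multiplication is already injective (indeed invertible) by Remark~\ref{ringconditions}; for an integral domain one extends the range to its embedding field $\mathbb{F}$ of characteristic zero, where multiplication by $(n+1)!$ is surjective. In either case Lemma~\ref{mainfact} yields $F\equiv 0$, and Theorem~\ref{Phighmulti} then gives $A\in \mathscr{D}_{n}(R)$.

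The main obstacle is precisely this last step: the univariate equation records only the diagonal values $F^{\ast}$, and over a general ring a symmetric multiadditive function need not be recoverable from its diagonal. The point of the stated ring conditions is exactly to make the polarization-based uniqueness of Lemma~\ref{mainfact} applicable with parameter $(n+1)!$, so that the seemingly weaker univariate hypothesis turns out to be equivalent to the full multivariate identity.
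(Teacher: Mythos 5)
Your proposal is correct and follows essentially the same route as the paper: Corollary~\ref{Prop3} is obtained there precisely by symmetrizing the univariate equation into the $(n+1)$-additive expression of Theorem~\ref{Phighmulti}, recognizing \eqref{high_single} as its diagonalization, and invoking Lemma~\ref{mainfact} (with the stated ring hypotheses supplying the divisibility conditions) to pass from the vanishing diagonal back to the vanishing multivariate identity. The only nitpick is that in the integral-domain case the property of the embedding field $\mathbb{F}$ that Lemma~\ref{mainfact} actually requires is injectivity of multiplication by $(n+1)!$ in the range (which holds, indeed bijectively, in characteristic zero), not surjectivity --- but this is the same harmless wording slip that appears in Remark~\ref{ringconditions} itself.
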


\begin{cor}\label{cor3}
Let $A\colon R\to R\subset \mathbb{F}$ be an additive mapping satisfying $A(1)=0$, where $R$ is an integral domain with unity, 
$\mathbb{F}$ is its embedding field with $\mathrm{char}\left(\mathbb{F}\right)=0$,  $n\in \mathbb{N}$ and $n\geq 1$. 
$A\in \mathscr{D}_{n}(R)$ if and only if there exist constants $a_{1}, \ldots, a_{n+1}\in F$, not all zero, such that 
 \begin{equation}\label{gen_single}
  \sum_{i=0}^{n}a_{n+1-i}x^{i}A\left(x^{n+1-i}\right)=0
 \end{equation}
 for any $x\in R$. 

Especially, $\displaystyle\sum_{i=1}^{n+1}ia_{i}=0$ provided that $A$ is a not identically zero solution. 
\end{cor}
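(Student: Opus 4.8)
The plan is to obtain this univariate statement as a direct consequence of its multivariate counterpart, Theorem \ref{Phighmulti1}, by the diagonalization principle announced at the start of this section. The bridge between the two formulations is the observation that substituting $x_1=\cdots=x_{n+1}=x$ into the left-hand side of \eqref{gen_singlemultivariate} reproduces exactly the left-hand side of \eqref{gen_single}: for a fixed cardinality $i$ every product $\prod_{j\in I}x_j$ collapses to $x^{i}$ and every argument $\prod_{k\notin I}x_k$ collapses to $x^{n+1-i}$, so the inner sum $\sum_{\mathrm{card}(I)=i}$ contributes $\binom{n+1}{i}$ identical terms $x^{i}A(x^{n+1-i})$, and the weight $1/\binom{n+1}{i}$ cancels this multiplicity. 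Hence \eqref{gen_singlemultivariate} diagonalizes to $\sum_{i=0}^{n}a_{n+1-i}x^{i}A(x^{n+1-i})=0$.

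For the forward implication this already suffices. If $A\in\mathscr{D}_{n}(R)$, then Theorem \ref{Phighmulti1} supplies \eqref{gen_singlemultivariate} with the explicit constants $a_{n+1-i}=(-1)^{i}\binom{n+1}{i}$, which are not all zero and satisfy $\sum_{i=1}^{n+1}ia_i=0$; diagonalizing as above yields \eqref{gen_single}. The substance therefore lies in the converse, and here I would first introduce the symmetric $(n+1)$-additive lift of the univariate form, namely
\[
\Phi(x_1,\ldots,x_{n+1})=\sum_{i=0}^{n}\frac{a_{n+1-i}}{\binom{n+1}{i}}\sum_{\mathrm{card}(I)=i}\left(\prod_{j\in I}x_j\right)A\left(\prod_{k\in\{1,\ldots,n+1\}\setminus I}x_k\right),
\]
regarded as a map $R^{n+1}\to\mathbb{F}$. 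Two properties need checking. First, $\Phi$ is symmetric, since a permutation of the arguments merely permutes the subsets $I$ of each fixed cardinality. Second, $\Phi$ is additive in each variable $x_\ell$: in the terms with $\ell\in I$ the factor $x_\ell$ occurs linearly in the product standing in front while the argument of $A$ is free of $x_\ell$, whereas in the terms with $\ell\notin I$ one factors $\prod_{k\notin I}x_k=x_\ell\cdot c$ and invokes the additivity of $A$ through $A((x_\ell+x_\ell')c)=A(x_\ell c)+A(x_\ell' c)$. Thus $\Phi$ is a symmetric $(n+1)$-additive function whose diagonalization $\Phi^{\ast}$ equals the left-hand side of \eqref{gen_single}.

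With this setup the hypothesis \eqref{gen_single} reads precisely $\Phi^{\ast}\equiv 0$. Since $\mathrm{char}(\mathbb{F})=0$, multiplication by $(n+1)!$ is injective in the range $\mathbb{F}$, so Lemma \ref{mainfact} forces $\Phi\equiv 0$; that is, $A$ satisfies the multivariate equation \eqref{gen_singlemultivariate} with the given constants. As $a_1,\ldots,a_{n+1}$ are not all zero by assumption, Theorem \ref{Phighmulti1} then delivers $A\in\mathscr{D}_{n}(R)$, and its concluding clause gives $\sum_{i=1}^{n+1}ia_i=0$ whenever $A$ is not identically zero. I expect the only delicate point—and the one genuine use of the characteristic-zero hypothesis—to be the passage from $\Phi^{\ast}\equiv 0$ to $\Phi\equiv 0$ via Lemma \ref{mainfact}; the remaining work is the bookkeeping that identifies $\Phi^{\ast}$ with the univariate left-hand side and a direct appeal to the already-established multivariate theorem.
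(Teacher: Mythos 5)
Your proposal is correct and follows exactly the paper's intended route: the paper obtains Corollary \ref{cor3} as a direct consequence of the multivariate Theorem \ref{Phighmulti1} together with Lemma \ref{mainfact}, via the same symmetrization map $\Phi$ whose diagonalization is the left-hand side of \eqref{gen_single} and whose vanishing (forced by injectivity of multiplication by $(n+1)!$ in the characteristic-zero range) yields \eqref{gen_singlemultivariate}. Nothing essential differs from the paper's argument.
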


As an application of Theorem \ref{Thm_mainmultivariate} we are going to give the general form of the solutions of functional equations of the form 
\begin{equation}\label{Eq1}
 \sum_{k=1}^{n}x^{p_{k}}f_{k}(x^{q_{k}})=0
 \qquad 
 \left(x\in R\right),
\end{equation}
where $n\in \mathbb{N}$, $n\geq 1$, $p_{k}, q_{k}$ are given nonnegative integers for all $k=1, \ldots, n$ and $f_{1}, \ldots, f_{n}\colon R\to R$ are additive functions on the ring $R$. 
The problem is motivated by B.~Ebanks' paper \cite{Eba15}, \cite{Eba17} although there are some earlier relevant results. 
For example the case $n=2$ is an easy consequence of \cite[Theorem 6]{Gse13}.

Using the homogeneity of additive functions it can be easily seen that equation \eqref{Eq1} can be assumed to be of constant degree of homogeneity:
$$p_{k}+q_{k}=l \qquad (k=1, \ldots, n);$$
see \cite[Lemma 2.2]{Eba15}. In other words collecting the addends of equation \eqref{Eq1} 
of the same degree of homogeneity we discuss the vanishing of the left hand side term by term. 
To formulate the multivariate version of the functional equation let us define the mapping  
\[
 \Phi(x_{1}, \ldots, x_{l})=
 \sum_{k=1}^{n}
 \dfrac{1}{\binom{l}{p_{k}}} \sum_{\mathrm{card}(I)=p_{k}} \left(\prod_{j\in \left\{1, \ldots, l\right\}\setminus I}x_{j}\right)\cdot  f_{k}\left(\prod_{i\in I}x_{i}\right)
 \quad 
 \left(x_{1}, \ldots, x_l\in R\right), 
\]
where the summation is taken for all subsets $I$ of cardinality $p_{k}$ of the index set 
$\left\{1, 2,\ldots, l\right\}$. Due to the additivity of the functions $f_{1}, \ldots, f_{n}$, the mapping $\Phi\colon R^{l}\to R$ 
is a symmetric $l$-additive function with vanishing trace  
\[
 \Phi^{\ast}(x)=\Phi(x, \ldots, x)=\sum_{k=1}^{n}x^{p_{k}}f_{k}(x^{q_{k}}) =0
 \qquad 
 \left(x\in R\right)
\]
and we can conclude the equivalence of the following statements by Lemma \ref{mainfact}: 

\begin{enumerate}[(i)]
 \item The functions $f_{1}, \ldots, f_{n}$ fulfill equation \eqref{Eq1}. 
 \item For any $x_{1}, \ldots, x_{l}\in R$
 \[
 \sum_{k=1}^{n}
 \dfrac{1}{\binom{l}{p_{k}}} \sum_{\mathrm{card}(I)=p_{k}} \left(\prod_{j\in \left\{1, \ldots, l\right\}\setminus I}x_{j}\right) \cdot f_{k}\left(\prod_{i\in I}x_{i}\right)=0.
 \]
\end{enumerate}

In what follows we summarize all the simplifications we use to solve equation \eqref{Eq1}.

\begin{enumerate}[(i)]
\item[($\mathscr{C}0)$] $R$ is a linear commutative, unitary ring or an integral domain with unity embeddable to a field of characteristic zero. 
\item[($\mathscr{C}1)$] Each addend has the same degree of homogeneity, i.e. $p_{k}+q_{k}=l$ for all $k=1, \ldots, n$.
\end{enumerate}

If $p_{i}=p_{j}$ for some different indices $i\neq j\in \left\{1, \ldots, n\right\}$ then $q_{i}=q_{j}$ by the constancy of the degree of homogeneity and we can write that 
$$x^{p_{i}}f_{i}(x^{q_{i}})+x^{p_{j}}f_{j}(x^{q_{j}})=x^{p_{i}}\widetilde{f}(x^{q_{i}}),$$
where $\widetilde{f}=f_{i}+f_{j}$. Therefore the number of the unknown functions has been reduced. If $q_{i}=0$ then, by choosing $j\neq i$, we can write equation (\ref{Eq1}) into the form 
\[
 \sum_{\nu\in \left\{1, \ldots, n\right\}\setminus \left\{ i, j\right\}}x^{p_{\nu}}f_{\nu}(x^{q_{\nu}})+x^{p_{j}}\widetilde{f}(x^{q_{j}})=0,
\]
where $ \widetilde{f}(x)= f_{i}(1) x+f_{j}(x)$ and the number of the unknown functions has been reduced again. Without loss of the generality we can suppose that  

\begin{enumerate}[(i)]
  \item[($\mathscr{C}2)$] $p_1, \ldots, p_n$ are pairwise different nonnegative integers, 
  $q_1, \ldots, q_n$ are positive, pairwise different integers. 

Especially, $l\geq n$ because of $(\mathscr{C}1)$. By multiplying equation \eqref{Eq1} with $x$ if necessary we have that $l\geq n+1$.
\item[($\mathscr{C}3)$] Moreover, for any $k=1, \ldots, n$, condition $f_{k}(1)=0$ can be assumed. Otherwise, let us introduce the functions 
 \[
  \widetilde{f_{k}}(x)=f_{k}(x)-f_{k}(1) x 
  \qquad 
  \left(x\in R\right). 
 \]
They are obviously additive and, for any $x\in R$
\[
 \sum_{k=1}^{n}x^{p_{k}}\widetilde{f_{k}}(x^{q_{k}})=
 \sum_{k=1}^{n}x^{p_{k}}\left[f_{k}(x^{q_{k}})-f_{k}(1) x^{q_{k}} \right]=
 \sum_{k=1}^{n}x^{p_{k}}f_{k}(x^{q_{k}})-x^{l}\sum_{k=1}^{n}f_{k}(1)=0
\]
because $\displaystyle\sum_{k=1}^{n}f_{k}(1)=0$ due to \eqref{Eq1}.
 \end{enumerate}

Assuming conditions $(\mathscr{C}0)$, $(\mathscr{C}1)$, $(\mathscr{C}2)$ and $(\mathscr{C}3)$ it is enough to investigate functional equation 
\begin{equation}\label{Eq2}
 \sum_{i=0}^{n}x^{i}f_{n+1-i}(x^{n+1-i})=0
 \qquad 
 \left(x\in \mathbb{R}\right).
\end{equation}
For those exponents that do not appear in the corresponding homogeneous term of the original equation we assign the identically zero function (that is clearly additive).

\begin{cor}\label{Thm_main}
 Let $f_{1}, \ldots, f_{n+1}\colon\allowbreak R\to R$ be additive functions such that $f_{i}(1)=0$ for all $i=1, \ldots, n+1$, where $R$ is a linear commutative ring with unity, $n\in \mathbb{N}$ and $n\geq 1$. Functional equation 
 \begin{equation}\label{Eq7}
  \sum_{i=0}^{n}x^{i}f_{n+1-i}\left(x^{n+1-i}\right)=0 
  \qquad 
  \left(x\in R\right)
 \end{equation}
holds, if and only if 
\[
 f_{n+1-i}=(-1)^{i}\sum_{k=0}^{i}\binom{n+1-i+k}{k}D_{n-i+k} 
 \qquad 
 \left(i=0, \ldots, n\right), 
\]
where for all possible indices $i$, we have $D_{i}\in \mathscr{D}_{i}(R)$. 
\end{cor}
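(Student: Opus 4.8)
The plan is to deduce the univariate statement from its multivariate counterpart, Theorem \ref{Thm_mainmultivariate}, by realizing the left-hand side of \eqref{Eq7} as the diagonalization of a symmetric multiadditive function and then invoking Lemma \ref{mainfact} to pass back and forth between the diagonal and the full multivariate identity. This is precisely the reduction scheme described at the opening of Section 3, so the proof should be short once the bridge is set up.

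First I would introduce the symmetric $(n+1)$-additive map
\[
\Phi(x_1, \ldots, x_{n+1}) = \sum_{i=0}^{n}\frac{1}{\binom{n+1}{i}}\sum_{\mathrm{card}(I)=i}\left(\prod_{j\in I}x_{j}\right)\cdot f_{n+1-i}\left(\prod_{k\in \{1,\ldots,n+1\}\setminus I}x_{k}\right),
\]
which is symmetric and additive in each variable because every $f_{n+1-i}$ is additive and the accompanying factors are products of distinct variables; this is exactly the left-hand side of \eqref{Eq7multivariate}. A direct count of the $\binom{n+1}{i}$ subsets of cardinality $i$ shows that its trace is
\[
\Phi^{\ast}(x) = \Phi(x, \ldots, x) = \sum_{i=0}^{n}x^{i}f_{n+1-i}(x^{n+1-i}),
\]
that is, the left-hand side of \eqref{Eq7}. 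The hypotheses on the $f_{n+1-i}$ (additivity and $f_{n+1-i}(1)=0$) and on $R$ coincide with those of Theorem \ref{Thm_mainmultivariate}, which is what makes the transfer legitimate.

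For the necessity direction, I would assume \eqref{Eq7} and read it as $\Phi^{\ast}\equiv 0$. Since $R$ is a linear commutative ring with unity, Remark \ref{ringconditions} guarantees that multiplication by $(n+1)!$ is injective in $R$, so Lemma \ref{mainfact} forces $\Phi\equiv 0$; equivalently, the multivariate equation \eqref{Eq7multivariate} holds for all $x_1,\ldots,x_{n+1}\in R$. The claimed representation of the $f_{n+1-i}$ in terms of derivations $D_j\in\mathscr{D}_j(R)$ is then immediate from the forward implication of Theorem \ref{Thm_mainmultivariate}. The sufficiency direction is even shorter: if the $f_{n+1-i}$ have the displayed form, the converse implication of Theorem \ref{Thm_mainmultivariate} yields \eqref{Eq7multivariate}, and setting $x_1=\cdots=x_{n+1}=x$ recovers \eqref{Eq7}.

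The only genuinely substantive step is the lifting of the diagonal identity $\Phi^{\ast}\equiv 0$ to the full multiadditive identity $\Phi\equiv 0$, and this is exactly where the assumption that $R$ be a linear ring is used, since it supplies the injectivity of multiplication by $(n+1)!$ required by Lemma \ref{mainfact}. Everything else is bookkeeping: matching $\Phi$ with the left-hand side of \eqref{Eq7multivariate} and quoting the already-established multivariate theorem. (I would also note in passing, following the remark after Theorem \ref{Thm_mainmultivariate}, that the integral-domain case of $(\mathscr{C}0)$ is covered identically, using surjectivity of multiplication by $(n+1)!$ in the embedding field of characteristic zero in place of injectivity in $R$.)
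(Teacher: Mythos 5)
Your proposal is correct and is exactly the paper's argument: the paper proves this corollary as ``a direct consequence of Theorem \ref{Thm_mainmultivariate} and Lemma \ref{mainfact},'' relying on the symmetrization scheme spelled out at the start of Section 3, which is precisely the bridge you construct via $\Phi$ and its diagonalization. You have simply written out the details (injectivity of multiplication by $(n+1)!$ from linearity, lifting $\Phi^{\ast}\equiv 0$ to $\Phi\equiv 0$, and diagonalizing for the converse) that the paper leaves implicit.
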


\begin{proof}
The statement is a direct consequence of Theorem \ref{Thm_mainmultivariate} and Lemma \ref{mainfact}.
\end{proof} 

\begin{rem}
Corollary \ref{Thm_main} can be also stated in case of integral domains provided that the range of the functions $f_1, \ldots, f_{n+1}$ 
is extended to a field having characteristic zero, 
containing $R$ as a subring; see Remark \ref{ringconditions}. 
The proof is working without any essential modification. 
\end{rem}

\subsection*{Examples I: equations without missing powers}

As an application of the results presented above, we will show some examples. 

\begin{ex}
 Let $f\colon R\to R$ be a non-identically zero additive function and assume that 
 \[
  f(x^{3})+xf(x^{2})-2x^{2}f(x)=0
 \]
is fulfilled for any $x\in R$. Then the function $A(x)=f(x)-f(1) x$ satisfies the conditions of Corollary \ref{cor3}. Therefore there exists a derivation $D_{2}\in \mathscr{D}_{2}(R)$ such that  
\[
 f(x)=D_{2}(x)+f(1) x 
 \qquad
 \left(x\in R\right). 
\]
Moreover, if we define $a_{3}=1$, $a_{2}= 1$ and $a_{1}=-2$, then $3a_{3}+2a_{2}+a_{1}\neq 0$, i.e. $D_{2}\equiv 0$  in the sense of Corollary \ref{cor3}. The solution is $f(x)=f(1)x$, where $f(1)\neq 0$ and $x\in R$. 
\end{ex}

\begin{ex}
  Let $f\colon R\to R$ be an additive function and assume that 
 \[
  f(x^{5})-5xf(x^{4})+10x^{2}f(x^{3})-10x^{3}f(x^{2})+5x^{4}f(x)=0
 \qquad 
 \left(x\in R\right). 
 \]
Let 
\[
 a_{5-i}=(-1)^{i}\binom{5}{i} 
 \qquad 
 \left(i=0, 1, 2, 3, 4\right). 
\]
Then we have 
\[
 \sum_{i=0}^{4}a_{5-i}x^{i}f(x^{5-i})=0 
 \qquad 
 \left(x\in R\right). 
\]
Thus there exists a fourth order derivation $D_{4}\in \mathscr{D}_{4}(R)$ such that 
\[
 f(x)=D_{4}(x)+f(1)x 
 \qquad 
 \left(x\in R\right). 
\]
Since $f(1)=0$, the solution is $f(x)=D_{4}(x)$, where $x\in R.$
\end{ex}

\subsection*{Examples II: equations with missing powers}

The following examples illustrate how to use the method of free variables in the solution of 
functional equations with missing powers instead of the direct application of Theorem \ref{Thm_mainmultivariate}/Corollary \ref{Thm_main}. 
The key step is the substitution of value $1$ as many times as the number of the missing powers 
(due to the symmetry of the variables there is no need to specify the positions for the substitution). 
This provides the reduction of the homogeneity degree of the functional equation to avoid formal (identically zero) terms in the solution. 
It is given in a more effective and subtle way.  
The method obviously shows how the number of the nonzero coefficients is 
related to the maximal order of the solution. 
In \cite{EbaRieSah} it was shown that if the number of nonzero coefficients $a_i \in R$ is $m$, 
then the solution of 
\[
\sum_{i=1}^{n} a_i x^{p_i}f(x^{q_i})=0
\]
is a derivation of order at most $m-1$.

\begin{ex}
Assume that we are given two additive functions 
$f, g\colon R\to R$ such that $f(1)=g(1)=0$ and 
\[
 f(x^{3})+x^{2}g(x)=0
 \qquad 
 \left(x\in R\right). 
\]
If we define the functions $f_{1}, f_{2}, f_{3}\colon R\to R$ as $f_{3}=f$, $f_{2}=0$ and $f_{1}=g$ then, by using Corollary \ref{Thm_main} with $n=2$, it follows that 
\[
 \begin{array}{rcl}
  f_{3}&=&D_{2}\\
  f_{2}&=&-3D_{2}-D_{1}\\
  f_{1}&=&3D_{2}+2D_{1}+D_{0}, 
 \end{array}
\]
where $D_{i}\in \mathscr{D}_{i}(R)$, if $i=0, 1, 2$.
It is a direct application of Corollary \ref{Thm_main} used by B.~Ebanks in \cite{Eba15}, \cite{Eba17}. 
Another way of the solution is to formulate the multivariate version of the equation in the first step: 
\[
 3f(x_{1}x_{2}x_{3})+x_{1}x_{2}g(x_{3})+x_{1}x_{3}g(x_{2})+x_{2}x_{3}g(x_{1})=0
 \qquad
 \left(x_{1}, x_{2}, x_{3}\in R\right). 
\]
If $x_{1}=x_{2}=x$ and $x_{3}=1$, then we get that 
\[
 3f(x^{2})+2xg(x)=0 
 \qquad 
 \left(x\in R\right). 
\]
Applying Corollary \ref{Thm_main} with $n=1$,  $f_{2}=3f$ and $f_{1}=2g$, it follows that  
\[
 \begin{array}{rcl}
  f_{2}&=&D_{1}\\
  2D_{1}+D_{0}+f_{1}&=&0, 
 \end{array}
\]
where $D_{i}\in \mathscr{D}_{i}(R)$ for all $i=0, 1$. Therefore $3f=D_{1}$ and $g=-D_{1}$, where $D_{1}\in \mathscr{D}_{1}(R)$. 
\end{ex}

\begin{ex}
 Let $f, g, h\colon R\to R$ be additive functions so that $f(1)=g(1)=h(1)=0$. Furthermore, assume that 
 \[
  f(x^{5})+xg(x^{4})+x^{4}h(x)=0
 \]
is fulfilled for all $x\in R$. To determine the functions $f, g, h$, we will show two ways. 

The first is a direct application of the results above used by 
B.~Ebanks in \cite{Eba15}, \cite{Eba17}. 
Let us define the functions $f_{1}, f_{2}, f_{3}, f_{4}, f_{5}\colon R\to R$ as $f_{1}= h$, $f_{2}=0$, $f_{3}=0$, $f_{4}=g$ and $f_{5}=f$. 
Then the equation takes the form 
\[
 \sum_{i=0}^{4}x^{i}f_{5-i}(x^{5-i})=0 
 \qquad 
 \left(x\in R\right). 
\]
and, by Corollary \ref{Thm_main}, for any $i=0, 1, 2, 3, 4$
\[
 f_{5-i}= (-1)^{i}\sum_{k=0}^{i}\binom{5-i+k}{k}D_{4-i+k}
\]
holds, that is 
\[
 \begin{array}{rcl}
  f_{5}-D_{4}&=&0\\
  5\,D_{4}+f_{4}+D_{3}&=&0\\
  -10\,D_{4}-4\,D_{3}+f_{3}-D_{2}&=&0\\
  10\,D_{4}+6\,D_{3}+3\,D_{2}+f_{2}+D_{1}&=&0\\
  -5\,D_{4}-4\,D_{3}-3\,D_{2}-2\,D_{1}+f_{1}-D_{0}&=&0, 
   \end{array}
\]
where for all $i=0, 1, 2, 3, 4$ we have $D_{i}\in \mathscr{D}_{i}(R)$. 
Bearing in mind the above notations, for the functions $f, g$ and $h$ this yields that 
\[
\begin{array}{rcl}
 f-D_{4}&=&0\\
  5\,D_{4}+g+D_{3}&=&0\\
  -10\,D_{4}-4\,D_{3}-D_{2}&=&0\\
  10\,D_{4}+6\,D_{3}+3\,D_{2}+D_{1}&=&0\\
  -5\,D_{4}-4\,D_{3}-3\,D_{2}-2\,D_{1}+h&=&0. 
   \end{array} 
\]

The second way gives a much more precise form of the unknown functions by formulating the multivariate version of the equation in the first step: 
\begin{multline*}
5 f(x_1 x_2 x_3 x_4 x_5)
\\
+g(x_2 x_3 x_4 x_5) x_1 + g(x_1 x_3 x_4 x_5) x_2+ g(x_2 x_1 x_4 x_5) x_3 + g(x_2 x_3 x_1 x_5) x_4+ g(x_2 x_3 x_4 x_1) x_5 
\\
+x_2 x_3 x_4 x_5 h(x_1)+ x_1 x_3 x_4 x_5 h(x_2) + x_2 x_1 x_4 x_5 h(x_3)+ x_2 x_3 x_1 x_5 h(x_4) + x_2 x_3 x_4 x_1 h(x_5)
\\
=0,
\end{multline*}
where $x_{1}, x_{2}, x_{3}, x_{4}, x_{5}\in R$. Substituting $x_{1}=x_{2}=x_{3}= x $ and $x_{4}=x_{5}=1$ we get that 
\[
 5f(x^{3})+2g(x^{3})+3xg(x^{2})+3x^{2}h(x)=0. 
\]
Taking $f_{3}= 5f+2g$, $f_{2}= 3g$ and $f_{1}= 3h$ it follows that  
\[
 f_{3}(x^{3})+xf_{2}(x^{2})+x^{2}f_{1}(x)=0 
 \qquad 
 \left(x\in R\right) 
\]
and, by Corollary \ref{Thm_main}, 
\[
 f_{3}= D_{2},
 \quad 
 f_{2}= -D_{1}-3D_{2},
 \quad 
 f_{1}=2D_{1}+3D_{2}, 
\]
where $D_{i}\in \mathscr{D}_{i}(R)$ for all $i=1, 2$. This means that 
\[
 \begin{array}{rcl}
  15 f&=&2D_{1}+9D_{2}\\
  3g &=&-D_{1}-3D_{2}\\
  3h&=&2D_{1}+3D_{2}, 
 \end{array}
\]
where $D_{i}\in \mathscr{D}_{i}(R)$ for all $i=1, 2$.
\end{ex}

\section{The application of the spectral synthesis in the solution of linear functional equations}

In what follows we present another approach to the problem of linear functional equations characterizing derivations among additive mappings in the special case of a finitely generated field $K$ over the field $\mathbb{Q}$ of rationals as the domain $R$ of the equation, i.e. $\mathbb{Q}\subset K=\mathbb{Q}(x_1, \ldots, x_m) \subset \mathbb{C}$, where $m\in \mathbb{N}$ and $\mathbb{C}$ denotes the field of complex numbers. The linearity of the functional equation means that the solutions form a vector space over $\mathbb{C}$. The idea of using spectral synthesis to find additive solutions of linear functional equations 
is natural due to the fundamental work \cite{KisLac25}. 
The key result says that spectral synthesis holds in any translation invariant closed linear subspace 
formed by additive mappings on a finitely generated subfield $K\subset \mathbb{C}$. 
Therefore such a subspace is spanned by so-called exponential monomials which can be given in terms of automorphisms of 
$\mathbb{C}$ and differential operators (higher order derivations), see also \cite{KisVin17} and \cite{KisVin17a}.

\subsection{Basic theoretical facts} Let $(G, *)$ be an Abelian group. By a \emph{variety} $V$ on $G$ we mean a translation invariant closed linear
subspace of $\mathbb{C}^G$, where $\mathbb{C}^G$ denotes the space of complex valued functions defined on $G$. 
The space of functions is equipped with the product topology. The translation invariance of the linear subspace provides that $f_g\colon  G \to \mathbb{C}$, $f_g(x)=f(g*x)$ is an element of $V$ for any $f\in V$ and $g\in G$. An additive mapping is a homomorphism of $G$ into the additive group of $\mathbb{C}$. The so-called {\it polynomials} are the elements of the algebra generated by the additive and constant functions. An {\it exponential mapping} is a nonzero (and, consequently, injective) homomorphism of $G$ into the multiplicative group of $\mathbb{C}$, i .e. $m\in \mathbb{C}^{G}$ such that $m(x*y)=m(x)\cdot m(y)$. An {\it exponential monomial} is the product of an exponential and a polynomial function. The finite sums of exponential monomials are called {\it polynomial-exponentials}. If a variety $V$ is spanned by exponential monomials then we say that {\it spectral synthesis} holds in $V$. If 
spectral synthesis holds in every variety on $G$ then spectral synthesis holds on $G$. 
Especially, \emph{spectral analysis} holds on $G$, i.e. every nontrivial variety contains an exponential; 
see Lemma 2.1 in \cite{KisLac25}.

To formulate the key result of \cite{KisLac25} let $G:=K^*$ (the multiplicative group of $K$) and consider the variety $V_a$ on $K^*$ consisting of the restriction of additive functions on $K$ (as an additive group) to $K^*$, i.e. 
\begin{equation}
\label{variety}
V_a=\{A |_{K^*} \ | \ A(x+y)=A(x)+A(y), \ \text{where}\ x\ \textrm{and}\ y\in K\}.
\end{equation}
It can be easily seen that if $A$ is an additive function then its translate $A_c(x)=A(cx)$ with respect to the multiplication by $c\in K^*$ is also additive. Theorem 4.3 in 
\cite{KisLac25} states that if the transcendence degree of $K$ over $\mathbb{Q}$ is 
finite\footnote{If $K$ is finitely generated over $\mathbb{Q}$ then it is automatically satisfied.} 
then spectral synthesis holds in every variety $V$ contained in $V_a$. 
By Theorem 3.4 in \cite{KisLac25}, the polynomials in $V$  correspond to mappings of the form $D(x)/x$ ($x\in K^*$), 
where $D$ is a differential operator on $K$. A \emph{differential operator} means the (complex) linear combination of 
finitely many mappings of the form $d_1 \circ \ldots \circ d_k$, where $d_1$, $\ldots$, $d_k$ are derivations on $K$. 
If $k=0$, then this expression is by convention the identity function. 
The exponentials in $V$ satisfy both
$m(x+y)=m(x)+m(y)$ and $m(x\cdot y)=m(x)\cdot m(y)$, where $x, y\in K$ and $m(0)=0$. 
Extending $m$ to an automorphism of $\mathbb{C}$ (see Lemma 4.1 in \cite{KisLac25}), 
a special expression for the elements (exponential monomials) spanning a variety $V$ contained in $V_a$ can be given: 
they are of the form $\varphi \circ D$, where $\varphi$ is the extension of an exponential 
$m\in V$ to an automorphism of $\mathbb{C}$ and $D$ is a differential operator on $K$; see Theorem 4.2 in \cite{KisLac25}. 

\subsection{Spectral synthesis in the variety generated by the solutions of equation \eqref{high_single}}  
Let $n\in \mathbb{N}$ and consider an additive mapping $A\colon K\to \mathbb{C}$ satisfying equation 
 \begin{equation}\label{high_singleagain}
  \sum_{i=0}^{n}(-1)^{i}\binom{n+1}{i}x^{i}A\left(x^{n+1-i}\right)=0 
  \qquad 
  \left(x\in K\right). 
 \end{equation}
First of all observe that the solutions of functional equation \eqref{high_singleagain} form a linear subspace. 
To get some information about the translates of the form $A_c(x)=A(cx)$ of the solutions we have to set the variable $x$ free by using a symmetrization process:
\begin{multline*}
  \Phi(x_{1}, \ldots, x_{n+1})
  =A(x_{1}\cdots x_{n+1})-\sum_{1\leq i\leq n+1}x_{i}A(x_{1}\cdots \hat{x}_{i}\cdots x_{n+1})
  \\+\sum_{i\leq i< j\leq n+1}x_{i}x_{j}A\left(x_{1}\cdots \hat{x}_{i}\cdots \hat{x}_{j}\cdots x_{n+1}\right)-
  \\ \cdots +(-1)^{n}\sum_{1\leq i\leq n+1}x_{1}\cdots \hat{x}_{i}\cdots x_{n+1}A(x_{i})
  \qquad 
  \left(x_{1}, \ldots, x_{n+1}\in K\right). 
 \end{multline*}
Due to equation (\ref{high_singleagain}), the diagonal $\Phi^{\ast}(x)=\Phi(x,\ldots,x)$
is identically zero. In the sense of Lemma \ref{mainfact}, the mapping $\Phi$ is also identically zero. By some direct computations 
\begin{multline*}
\Phi(x, \ldots, x, cx)\\
=A(c x^{n+1})-\binom{n}{1}xA(c x^n)-c xA(x^n)+\binom{n}{2}x^2A(c x^{n-1})+\binom{n}{1}c x^2A(x^{n-1})-\\
\binom{n}{3}x^3A(c x^{n-2})-\binom{n}{2}c x^3A(x^{n-2})
\\
+\ldots+(-1)^n x^{n}A(c x)+(-1)^n\binom{n}{n-1}c x^{n}A(x),
 \end{multline*}
\begin{multline*}
\Phi(x, \ldots, x, c)\\
=A(c x^{n})-\binom{n}{1}x A(c x^{n-1})-c A(x^n)+\binom{n}{2}x^2A(c x^{n-2})+\binom{n}{1}c xA(x^{n-1})-\\
\binom{n}{3}x^3A(c x^{n-3})-\binom{n}{2}c x^2A(x^{n-2})
\\
+\ldots+(-1)^n x^{n}A(c)+(-1)^n\binom{n}{n-1}c x^{n-1}A(x)
\end{multline*}
and, consequently, for any $c\in K^*$
$$\Phi(x, \ldots, x, cx)-x\Phi(x, \ldots, x, c)= \sum_{i=0}^{n+1}(-1)^{i}\binom{n+1}{i}x^{i}A\left(c x^{n+1-i}\right)
  \qquad 
  \left(x\in K\right), $$
	i.e. the translation invariant linear subspace generated by the solutions of equation (\ref{high_singleagain}) can be described by the family of equations
	 \begin{equation}\label{high_singleagaintranslated}
  \sum_{i=0}^{n+1}(-1)^{i}\binom{n+1}{i}x^{i}A\left(c x^{n+1-i}\right)=0 
  \qquad 
  \left(x\in K, c\in K^* \right).  
 \end{equation}

Let $c\in K^*$ and $x\in K$ be given and suppose that $f\colon K\to \mathbb{C}$ is the limit function\footnote{Since $K$ is countable, the limit can be taken in a pointwise sense.}
of the sequence $A_l$ of solutions of equation \eqref{high_singleagaintranslated}, that is for any $\varepsilon>0$ we have that
\[
\left |A_l(c x^{n+1-i})-f(c x^{n+1-i})\right | < \varepsilon \qquad (i=0, \ldots, n+1)
\]
provided that $l$ is large enough. Then 
\begin{multline*}
 \left| \sum_{i=0}^{n+1}(-1)^{i}\binom{n+1}{i}x^{i}f\left(c x^{n+1-i}\right)\right|
\\
=
\left|\sum_{i=0}^{n+1}(-1)^{i}\binom{n+1}{i}x^{i}f\left(c x^{n+1-i}\right)-\sum_{i=0}^{n+1}(-1)^{i}\binom{n+1}{i}x^{i}A_l\left(c x^{n+1-i}\right)\right| 
\\
\leq 
\sum_{i=0}^{n+1}\binom{n+1}{i}|x|^{i} \left |f\left(c x^{n+1-i}\right)-A_l\left(c x^{n+1-i}\right)\right |=\varepsilon \left(1+|x|\right)^{n+1}.
\end{multline*}

Therefore the space of solutions is a translation invariant closed linear subspace, i.e. it is a variety in $V_a$. 
Using the exponential element $m$ we have that
 \begin{multline*}
0=\sum_{i=0}^{n+1}(-1)^{i}\binom{n+1}{i}x^{i}m\left(c x^{n+1-i}\right)\\
=m(c)\sum_{i=0}^{n+1}(-1)^{i}\binom{n+1}{i}x^{i}m^{{n+1-i}}(x)=
m(c)(m(x)-x)^{n+1}
\end{multline*}
and, consequently, the exponential element must be the identity on the finitely ge\-ne\-ra\-ted field over $\mathbb{Q}$. 
This means that the space of the solutions is spanned by differential operators. 

\subsection{Spectral synthesis in the variety generated by the solutions of equation \eqref{gen_single}}  Let $n\in \mathbb{N}$ and consider an additive mapping $A\colon K\to \mathbb{C}$ satisfying equation 
 \begin{equation}\label{gen_singleagain}
  \sum_{i=0}^{n}a_{n+1-i}x^{i}A\left(x^{n+1-i}\right)=0 
  \qquad 
  \left(x\in K\right). 
 \end{equation}
First of all observe that the solutions of functional equation (\ref{gen_singleagain}) form a linear subspace. 
To get some information about the translates of the form $A_c(x)=A(c x)$ of the solutions we have to set the variable $x$ 
free by using a symmetrization process:
\begin{multline*}
  \Phi(x_{1}, \ldots, x_{n+1})=
 \sum_{i=0}^{n}a_{n+1-i}\dfrac{1}{\binom{n+1}{i}}\sum_{\mathrm{card}(I)=i}\left(\prod_{j\in I}x_{j}\right)\cdot 
 A\left(\prod_{k\in \left\{1, \ldots, n+1\right\}\setminus I}x_{k}\right) 
 \\
 \left(x_{1}, \ldots, x_{n+1}\in K\right). 
\end{multline*}
Then 
$$\Phi^{\ast}(x)=\Phi(x, \ldots, x)= \sum_{i=0}^{n}a_{n+1-i}x^{i}A\left(x^{n+1-i}\right)=0
 \qquad 
 \left(x\in K\right)
$$
because of equation \eqref{gen_singleagain}. In the sense of Lemma \ref{mainfact}, $\Phi$ is also identically zero. By some direct computations 
\[
\Phi(x, \ldots, x, cx)-x\Phi(x, \ldots, x, c)
=\dfrac{1}{n+1}\sum_{i=0}^{n} (n+1-i) a_{n+1-i}\left(x^iA(c x^{n+1-i})-x^{i+1}A(c x^{n-i})\right),
\]
that is the translation invariant linear subspace generated by the solutions of equation (\ref{gen_singleagain}) can be described by the family of equations
	 \begin{equation}\label{gen_singleagaintranslated}
  \sum_{i=0}^{n} (n+1-i) a_{n+1-i}\left(x^i A(c x^{n+1-i})-x^{i+1}A(c x^{n-i})\right)=0
	\qquad \left(x\in K, c\in K^* \right). 
 \end{equation}
By the same way as above we can prove that the space of the solutions of equation (\ref{gen_singleagaintranslated}) is a variety in $V_a$. Using the exponential element $m$ we have that
 $$0=m(c)(m(x)-x)\sum_{j=1}^{n+1} ja_j m^{j-1}(x)x^{n-(j-1)},$$
where $j=n+1-i$, $i=0, \ldots, n$. If $m(x)\neq x$ for some $x\in K^*$ then we can write that
$$0=\sum_{j=1}^{n+1} ja_j \left(\frac{m(x)}{x}\right)^{j-1}.$$
Therefore $m(x)/x$ is the root of the polynomial $\sum_{j=1}^{n+1} ja_j t^{j-1}$
and it has only finitely many different values. This is obviously a contradiction because for any $x\in K^*$, the function
\[
r\in \mathbb{Q}\longmapsto \frac{m(x+r)}{x+r}=\frac{m(x)+r}{x+r}
\]
provides infinitely many different values unless $m(x)=x$; note that $m(x)\neq x$ is equivalent to $m(x+r)\neq x+r$ for any rational number $r\in \mathbb{Q}$. Therefore we can conclude that the exponential element must be the identity on any finitely generated field over $\mathbb{Q}$ and the space of the solutions is spanned by differential operators.

\subsection{The solutions of equation \eqref{Eq7} in case of $n=3$} 
Let $n\in \mathbb{N}$, $n\geq 1$ be arbitrary and assume that the additive functions 
$f_{1}, \ldots, f_{n+1}\colon\allowbreak K\to \mathbb{C}$ satisfy equation
 \begin{equation}\label{Eq7again}
  \sum_{i=0}^{n}x^{i}f_{n+1-i}\left(x^{n+1-i}\right)=0 
  \qquad 
  \left(x\in K\right)
 \end{equation}
under the initial conditions $f_{i}(1)=0$ for all $i=1, \ldots, n+1$. In the first step we set the variable $x$ free by using a symmetrization process:
\[
  \Phi(x_{1}, \ldots, x_{n+1})
 =\sum_{i=0}^{n}\dfrac{1}{\binom{n+1}{i}}\sum_{\mathrm{card}(I)=i}\left(\prod_{j\in I}x_{j}\right) \cdot f_{n+1-i}\left(\prod_{k\in \left\{1, \ldots, n+1\right\}\setminus I}x_{k}\right) 
 \qquad 
 \left(x_{1}, \ldots, x_{n+1}\in K\right). 
\]
Then
$$\Phi^{\ast}(x)=\Phi(x, \ldots, x)= \sum_{i=0}^{n}x^{i}f_{n+1-i}\left(x^{n+1-i}\right)=0 
 \qquad 
 \left(x\in K\right)
$$
because of equation \eqref{Eq7again}. In the sense of Lemma \ref{mainfact}, $\Phi$ is also identically zero. 
We are going to investigate the explicit case of $n=3$, that is, equation 
 \begin{equation}\label{Eq7againspec}
  \sum_{i=0}^{3}x^{i}f_{4-i}\left(x^{4-i}\right)=0 
  \qquad 
  \left(x\in K\right). 
 \end{equation}
By some direct computations 
\begin{multline*}
\Phi(x,x,x,cx)
=f_4(cx^4)+\dfrac{3}{4}xf_3(cx^3)+\dfrac{1}{4}cxf_3(x^3)+\dfrac{1}{2}x^2f_2(cx^2)
\\
+\dfrac{1}{2}cx^2f_2(x^2)+\dfrac{1}{4} x^3f_1(cx)+\dfrac{3}{4}cx^3f_1(x),
 \end{multline*}
\begin{multline*}
\Phi(x,x,x,c)
=f_4(cx^3)+\dfrac{3}{4}xf_3(cx^2)+\dfrac{1}{4}cf_3(x^3)+\dfrac{1}{2}x^2f_2(cx)
\\
+\dfrac{1}{2}cxf_2(x^2)+\dfrac{1}{4} x^3f_1(c)+\dfrac{3}{4}cx^2f_1(x)
\end{multline*}
and, consequently, for any $c\in K^*$
\begin{multline*}
\Phi(x, x,x, cx)-x\Phi(x, x,x, c)
=f_4(cx^4)+x\left(\dfrac{3}{4}f_3-f_4 \right)(cx^3)
\\
+x^2\left(\dfrac{1}{2}f_2-\dfrac{3}{4}f_3 \right)(cx^2)
+x^3\left(\dfrac{1}{4}f_1-\dfrac{1}{2}f_2 \right)(cx)-\dfrac{1}{4}x^4f_1(c),
	\end{multline*}
 where $x\in K$,	i.e. we can formulate the family of equations
	 \begin{equation}\label{Eq7againspectranslated}
g_4(cx^4)+xg_3(cx^3)+x^2g_2(cx^2)+x^3g_1(cx)=x^4(g_1+g_2+g_3+g_4)(c) \qquad \left(x\in K\right),
 \end{equation}
where $c$ runs through the elements of $K^*$ and
$$\left[ {\begin{array}{c}
   g_1 \\
   g_2 \\
	g_3 \\
	g_4 \\
  \end{array} } \right]=\left[ {\begin{array}{cccc}
   1/4 & -1/2 & 0 & 0\\
   0 & \ \ 1/2 & -3/4 & 0\\
	0&0&\ \ 3/4 &-1 \\
	0& 0 &0 &\ 1\\
  \end{array} } \right]\left[ {\begin{array}{c}
   f_1 \\
   f_2 \\
	f_3 \\
	f_4
  \end{array} } \right].$$
The inverse formulas are 
$$f_4=g_4,\ f_3=\dfrac{4}{3}\left(g_3+g_4\right), \ f_2=2\left(g_2+g_3+g_4\right),\ f_1=4\left( g_1+g_2+g_3+g_4\right).$$
Taking $c=1$ we have that
	 \begin{equation}\label{Eq7againspeciterative}
g_4(x^4)+xg_3(x^3)+x^2g_2(x^2)+x^3g_1(x)=0   \qquad 
  \left(x\in K\right)
 \end{equation}
because of the initial conditions $f_1(1)=f_2(1)=f_3(1)=f_4(1)=0$. Therefore the space of the solutions is invariant under the action of the linear transformation represented by the matrix
$$M:=\left[ {\begin{array}{cccc}
   1/4 & -1/2 & 0 & 0\\
   0 & \ \ \ 1/2 & -3/4 & 0\\
	0&0&\ \ \ 3/4 &-1 \\
	0& 0 &0 &\ \ \ 1\\
  \end{array} } \right]=\frac{1}{4}\left[ {\begin{array}{cccc}
   1 & -2 & 0 & 0\\
   0 & \ \ \ 2 & -3 & 0\\
	0&0&\ \ \ 3 &-4 \\
	0& 0 &0 &\ \ \ 4\\
  \end{array} } \right].$$
As a MAPLE computation shows
\begin{verbatim}
with(LinearAlgebra);
M:=(1/4)*Matrix([[1,-2,0,0],[0,2,-3,0],[0, 0,3,-4], [0,0,0,4]]);
MatrixPower(M,n);
\end{verbatim}
	
$$ \left[ \begin {array}{cccc} {4}^{-n}&-{2}^{1-n}+2\,{4}^{-n}&{3}^{1+n}
{4}^{-n}-6\,{2}^{-n}+3\,{4}^{-n}&-4+12\,{3}^{n}{4}^{-n}-12\,{2}^{-n}+{
2}^{2-2\,n}\\ \noalign{\medskip}0&{2}^{-n}&-{3}^{1+n}{4}^{-n}+3\,{2}^{
-n}&6-12\,{3}^{n}{4}^{-n}+6\,{2}^{-n}\\ \noalign{\medskip}0&0&{3}^{n}{
4}^{-n}&-4+{2}^{2-2\,n}{3}^{n}\\ \noalign{\medskip}0&0&0&1\end {array}
 \right] 
 $$
and, consequently, 
$$\lim_{n\to \infty} M^n=\left[ {\begin{array}{cccc}
   0 & 0 & 0 & -4\\
   0 & 0 & 0 & \ \ \ 6\\
	0&0&0 &\ -4 \\
	0& 0 &0 &\ \ \ 1\\
	 \end{array} } \right].$$
	Therefore
	\begin{equation}\label{Eq7againspeciterativelimit}f_4(x^4)-4xf_4(x^3)+6x^2f_4(x^2)-4x^3f_4(x)=0   \qquad 
  \left(x\in K\right)
	\end{equation}
	and, by Theorem \ref{Phighmulti} (Corollary \ref{Prop3}), we can conclude that $f_4$ is a differential operator on any finitely generated field $K$. By taking the difference of equations (\ref{Eq7againspec}) and (\ref{Eq7againspeciterativelimit}) the number of the unknown functions can be reduced:
		\begin{equation}\label{Eq7againspecreduced}
		\tilde{f}_3(x^3)+x\tilde{f}_2(x^2)+x^2\tilde{f}_1(x)=0   \qquad 
  \left(x\in K\right),
	\end{equation}
	where
	$$\tilde{f}_3=f_3+4f_4,\ \tilde{f}_2=f_2-6f_4,\ \tilde{f}_1=f_1+4f_4.$$
	Repeating the process above we can conclude that $\tilde{f}_3$ is a differential operator on any finitely generated field $K$ and so on. 
	Note that it is an alternative way to prove Theorem \ref{Thm_mainmultivariate}/Corollary \ref{Thm_main} by using a descending process instead of the inductive argument. 
	
\vspace{1cm}

\noindent
\textbf{Acknowledgement.}
This paper is dedicated to the $65$\textsuperscript{th} birthday of Professor László Székelyhidi.

The research of the first author has been supported by the Hungarian Scientific Research Fund
(OTKA) Grant K 111651 and by the ÚNKP-4 New National Excellence Program of the Ministry of Human Capacities.
The work of the first and the third author is also supported by the
EFOP-3.6.1-16-2016-00022 project. The project is
co-financed by the European Union and the European
Social Fund. 
The second author was supported by the internal research project R-AGR-0500 of 
the University of Luxembourg and by the Hungarian Scientific Research Fund (OTKA) K 104178.

\bibliographystyle{plain}
\bibliography{ebanks}

\end{document}